\theoremstyle{definition}
\newtheorem*{theorem*}{Theorem}
\newtheorem{theorem}{Theorem}[section]   
\newtheorem{lemma}[theorem]{Lemma}
\newtheorem{proposition}[theorem]{Proposition}
\newtheorem{problem}[theorem]{Problem}
\newtheorem{corollary}[theorem]{Corollary} 
\newtheorem{definition}[theorem]{Definition}
\newtheorem{example}[theorem]{Example}
\def\CC{{\mathbb C}}
\def\PP{{\mathbb P}}
\def\QQ{{\mathbb Q}}
\def\Fq{{\mathbb F}_q}
\def\mm{{\mathsf m}}
\def\he{\mathscr{H}}
\def\fl{{\mathrm{fl}}}
\def\asc{{\mathrm{asc}}}
\def\fl{{\mathrm{fl}}}
\def\DD{\mathcal{D}}
 \title[Hessenberg varieties over finite fields]{Counting points on Hessenberg varieties over finite fields}     
 \author{Alex Abreu}
 \address{Instituto de Matemática e Estatística, Universidade Federal Fluminense, Niterói, Rio de Janeiro, Brasil.}
 \email{alexbra1@gmail.com}
 \author{Antonio Nigro}
 \address{Instituto de Matemática e Estatística, Universidade Federal Fluminense, Niterói, Rio de Janeiro, Brasil.}
 \email{antonio.nigro@gmail.com} 
 \author{Samrith Ram} 
\address{Indraprastha Institute of Information Technology Delhi, New Delhi, India.}
\email{samrithram@gmail.com}
\keywords{finite field, Hessenberg variety, chromatic quasisymmetric function, Hall-Littlewood polynomial.}  
\begin{document}
\begin{abstract}
  We give a counting formula in terms of modified Hall-Littlewood polynomials and the chromatic quasisymmetric function for the number of points on an arbitrary Hessenberg variety over a finite field. As a consequence, we express the Poincaré polynomials of complex Hessenberg varieties in terms of a Hall scalar product involving the symmetric functions above. We use these results to give a new proof of a combinatorial formula for the modified Hall-Littlewood polynomials.
\end{abstract}
\maketitle

\section{Hessenberg varieties}
 Let $[n]$ denote the set of the first $n$ positive integers. A \emph{Hessenberg function} is a weakly increasing function $\mm:[n]\to [n]$ such that  $m(i)\geq i$ for each $i \in [n]$. Given a field $F,$ and a linear operator $T:F^n\to F^n,$ the \emph{Hessenberg variety} $\he(\mm,T)$ is defined by
\begin{align*}
  \he(\mm,T):= \{\text{complete flags } V_1\subseteq V_2\subseteq \cdots \subseteq V_n=F^n: TV_i\subseteq V_{\mm(i)}\text{ for } i\in [n]\}.
\end{align*}

Hessenberg  varieties were originally introduced by De Mari, Procesi and Shayman~\cite{MR1043857}, and are important subvarieties of the full flag variety. In the case where $\mm(i)=i$ for $1\leq i\leq n$, the variety $\he(\mm,T)$ is called the Springer variety associated with $T$. Let $\Fq$ denote the finite field with $q$ elements. The main objective of this article is to derive an explicit formula for the number of $\Fq$-rational points on $\he(\mm,T)$ for a linear operator $T$ on $\Fq^n$. One reason for seeking such a formula is that point counting often yields insights into the cohomology of varieties over complex numbers. Another motivation for seeking such a formula comes from the work of Goresky, Kottwitz and MacPherson \cite{MR2209851,MR2040285} who proved that certain orbital integrals can be interpreted through the process of counting points on certain (generalizations of) Hessenberg varieties (also see Tsai~\cite{MR3622875} where the problem of counting points on certain étale schemes over Hessenberg varieties is considered). The question of point enumeration also appears explicitly on MathOverflow \cite{MO196537}.

Hessenberg varieties were considered in the finite-field setting by Fulman~\cite{MR1704269} who applied the Weil conjectures to obtain results on the combinatorics of descents in the symmetric group. Tymoczko \cite{MR2275912} showed that complex Hessenberg varieties are paved by affines and gave combinatorial formulas for their Poincaré polynomials; these results have implications for point counting (see Theorem~\ref{thm:sameforcandfq}). Escobar, Precup and Shareshian \cite{EPS} gave an explicit formula for the number of $\Fq$-rational points on a Hessenberg variety of codimension one. Gagnon \cite{Gagnon} considered the problem of point counting on variant nilpotent Hessenberg varieties associated to ad-nilpotent ideals. Ram \cite{ram2024lusztigvarietiesmacdonaldpolynomials} used point counts of Lusztig varieties (which are generalizations of Hessenberg varieties) to compute traces in Hecke algebras and made connections to parabolic Springer fibers. One feature of the existing results on counting $\Fq$-rational points is that they almost exclusively deal with the case where the linear operator defining the Hessenberg variety has a characteristic polynomial that splits over the base field. One of the main contributions of this paper is to give a universal formula that also accounts for the non-split cases (Theorem \ref{thm:main}). Our enumeration result also relates point counting in the general case to the combinatorics of Hall-Littlewood polynomials.

In order to state the counting formula, we associate suitably defined symmetric functions to the Hessenberg function $\mm$ and the operator $T$. For any graph $G$ with vertex set $[n]$, consider all vertex-colorings, defined as functions $\kappa:G\to \PP,$ where $\PP$ denotes the set of positive integers. The \emph{chromatic quasisymmetric function} of $G$ is a generating function in infinitely many variables $x=(x_1,x_2,\ldots)$ defined by 
\begin{align*}
  X_G(x;t):=\sum_{\kappa}t^{\asc({\kappa})}x_{\kappa},
\end{align*}
where the sum is taken over all proper vertex colorings $\kappa$ and
\begin{align*}
  \asc(\kappa):=|\{\{i,j\}\in E(G):i<j \text{ and }\kappa(i)<\kappa(j)\}|,
\end{align*}
is the number of ascents of the coloring $\kappa$. Here $x_\kappa$ denotes the monomial $x_{\kappa(1)}x_{\kappa(2)}\cdots x_{\kappa(n)}$.  Chromatic quasisymmetric functions  of graphs were introduced by Shareshian and Wachs \cite{MR3488041}, building on earlier work by Stanley \cite{MR1317387}. In fact $X_G(x;t)$ can be realized via the complex character theory of finite general linear groups. For more on this topic see Gagnon~\cite{Gagnon} who also makes connections between point counting on certain variations of nilpotent Hessenberg varieties and some results of Precup and Sommers~\cite{MR4391520}.

To each Hessenberg function, we associate an indifference graph $G(\mm)$. The graph $G(\mm)$ has vertex set $[n]$ and edge set $E(G)=\{\{i,j\}:i<j\leq \mm(i)\}$. It is well known that $X_{G(\mm)}(x;t)$ is in fact symmetric in the variables $x_i$. 

 Let $\omega$ denote the standard involution on the ring of symmetric functions which takes the Schur function $s_\lambda$ to $s_{\lambda'},$ where $\lambda'$ denotes the partition conjugate to $\lambda$. We are now ready to state our main result.
\begin{theorem}
\label{thm:main}
For each operator $T$ on $\Fq^n$, the number of points on the Hessenberg variety $\he(\mm,T)$ is given by
  $$
|\he(\mm,T)|=\langle F_T(x), \omega X_{G(\mm)}(x;q)\rangle.
$$
where $\langle \cdot,\cdot \rangle$ denotes a Hall scalar product. 
\end{theorem}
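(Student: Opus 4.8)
The plan is to compute $|\he(\mm,T)|$ geometrically, via a stratification dictated by the primary decomposition of $T$, and then to recognise the resulting generating function as the asserted Hall scalar product. Conjugating $T$ yields an isomorphic variety and changes none of the conjugacy invariants of $T$ (hence not $F_T$), so one may assume $T$ is in a convenient normal form. Write $\Fq^n=\bigoplus_f W_f$ for the primary decomposition, the sum over the monic irreducible factors $f$ of the characteristic polynomial, with $W_f$ the generalised eigenspace, $d_f=\deg f$, and $\lambda_f$ the Jordan type of $T|_{W_f}$, so that $\sum_f d_f|\lambda_f|=n$. Fix a total order on the $f$'s and let $0=U_0\subsetneq U_1\subsetneq\cdots$ be the induced $T$-stable partial flag with $U_j/U_{j-1}\cong W_{f_j}$.

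Stratify $\he(\mm,T)$ by the relative position of a flag $V_\bullet$ with respect to $U_\bullet$, recorded by the word $c\colon[n]\to\{f\}$ that tracks, at each step, which block $W_f$ absorbs the newly added line. Taking the associated graded of $V_\bullet$ along $U_\bullet$ yields, for each $f$, a complete flag $V^{(f)}_\bullet$ on $W_f$, and the Hessenberg condition descends to $T|_{W_f}V^{(f)}_a\subseteq V^{(f)}_{\mm^{(f)}_c(a)}$ for an induced Hessenberg function $\mm^{(f)}_c$ on $[|c^{-1}(f)|]$; moreover $G(\mm)[c^{-1}(f)]$, with the order inherited from $[n]$, is isomorphic to $G(\mm^{(f)}_c)$, so that $X_{G(\mm)[c^{-1}(f)]}(x;q)=X_{G(\mm^{(f)}_c)}(x;q)$ (in particular it is symmetric). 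The key geometric claim is that $V_\bullet\mapsto(V^{(f)}_\bullet)_f$ realises the stratum for $c$ as an affine bundle over $\prod_f\he(\mm^{(f)}_c,T|_{W_f})$ of rank $e(c)$, where $e(c)$ is a purely combinatorial $\mm$-twisted ascent/inversion statistic of the shuffle $c$ — it counts edges of $G(\mm)$ joining distinct blocks in one fixed direction — and one checks it is independent of the $\lambda_f$ and of $q$, specialising to the $q$-multinomial count when $\mm\equiv n$. Summing strata,
\[
|\he(\mm,T)|=\sum_{c}q^{e(c)}\prod_f\bigl|\he(\mm^{(f)}_c,T|_{W_f})\bigr|.
\]

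Since $F_T$ is multiplicative over primary components, the Hall scalar product against $F_T$ unfolds through the coproduct of the ring of symmetric functions, so it suffices to combine (i) the coproduct expansion of the chromatic quasisymmetric function, $\Delta\,X_{G(\mm)}(x;q)=\sum_{S\sqcup S'=[n]}q^{e(S,S')}X_{G(\mm)[S]}(x;q)\otimes X_{G(\mm)[S']}(x;q)$, iterated over the blocks $c^{-1}(f)$ — which reproduces exactly the shuffle sum and the exponents $e(c)$ above, using $X_{G(\mm)[c^{-1}(f)]}=X_{G(\mm^{(f)}_c)}$ and the fact that $\omega$ respects the coproduct — with (ii) the \emph{primary base case} $|\he(\mm',T')|=\langle F_{T'},\omega X_{G(\mm')}(x;q)\rangle$ for $T'$ primary. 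When $d_f=1$ one has $T'=\alpha I+N$ and $\he(\mm',T')=\he(\mm',N)$, so it is enough to treat $N$ nilpotent; there $\he(\mm',N)$ admits an affine paving (in the spirit of the Hessenberg pavings of Tymoczko and Precup), and the generating function of the cell dimensions is matched with $\langle\tilde H_{\lambda}(x;q),\omega X_{G(\mm')}(x;q)\rangle$, where $\tilde H_\lambda$ is the modified Hall–Littlewood polynomial, by an abstract characterisation of $\tilde H_\lambda$ (triangularity together with a normalisation), so as not to presuppose the combinatorial expansion one ultimately wants to recover. When $d_f>1$, base change to the splitting field $\F_{q^{d_f}}$ turns $T'$ into $d_f$ Frobenius-conjugate primary pieces of nilpotent type $\lambda_f$; Galois descent (Lang's theorem applied to the affine strata) expresses the $\Fq$-points in terms of the nilpotent count over $\F_{q^{d_f}}$, which is precisely what produces the substitution $q\mapsto q^{d_f}$ in $F_{T'}$ and returns us to the nilpotent base case.

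The main obstacle I expect is the geometric claim in the second paragraph: that each stratum is a genuine affine bundle — not merely set-theoretically affinely fibered after refining the stratification — over the product of the smaller Hessenberg varieties, with rank exactly the combinatorial quantity $e(c)$, uniformly after arbitrary base change. This requires a careful analysis of which off-diagonal coordinates of $V_\bullet$ (measured against the block filtration $U_\bullet$) remain free and which are pinned down by the full, as opposed to the associated-graded, Hessenberg condition $TV_i\subseteq V_{\mm(i)}$. A secondary difficulty is the nilpotent base case: identifying the cell-dimension statistic of the affine paving of $\he(\mm',N)$ with the modified Hall–Littlewood polynomial without circularity, which forces one either to exhibit a recursion on the Jordan type mirroring a Pieri rule for $\tilde H_\lambda$, or to pin both sides down by a uniqueness argument as the Hessenberg function $\mm'$ varies.
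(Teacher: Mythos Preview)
Your strategy is genuinely different from the paper's, and considerably more elaborate. The paper never decomposes $T$. It fixes $T$ arbitrary and varies $\mm$: a short fiberwise count over a forgetful map shows that $\mm\mapsto|\he(\mm,T)|$ satisfies the modular law (Lemma~\ref{lem:modular}), so by Proposition~\ref{prop:base} this function is a linear combination of its values on the Hessenberg functions $k_\lambda$ with coefficients $a_{q,\lambda}(\mm)/[\lambda]_q!$. For $\mm=k_\lambda$ the Hessenberg condition collapses to requiring that $\ell(\lambda)$ specified members of the complete flag be $T$-invariant, and the count is $[\lambda]_q!\,\langle F_T,h_\lambda\rangle$ by the very definition of $F_T$. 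Summing gives the Hall scalar product. No primary decomposition, no affine-bundle analysis, no Galois descent, and no Tymoczko paving is used; the nilpotent identity you want as a base case is in the paper a \emph{corollary} (Corollary~\ref{cor:tripoints} with $r=1$), and Tymoczko's formula enters only afterwards, in the Appendix, to extract a combinatorial formula for $\tilde H_\lambda$.

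This points to the real gap in your outline: the nilpotent base case. You propose to match Tymoczko's cell-dimension generating function for $\he(\mm',N)$ with $\langle\tilde H_\lambda,\omega X_{G(\mm')}\rangle$ via ``triangularity plus a normalisation'', but that characterises $\tilde H_\lambda$ as a symmetric function in $x$; it says nothing about its pairings with $\omega X_{G(\mm')}$ as $\mm'$ ranges over Dyck paths. To invoke any such characterisation you must first know that the Tymoczko counts factor through $\omega X_{G(\mm')}$ --- i.e.\ that there exists a single symmetric function $G_\lambda$ with $|\he(\mm',N)|=\langle G_\lambda,\omega X_{G(\mm')}\rangle$ for \emph{all} $\mm'$ --- and only then argue $G_\lambda=\tilde H_\lambda$. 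The clean way to produce such a $G_\lambda$ is precisely the modular law on $\mm'$; and once you have that, the same argument works for arbitrary $T$, making your primary decomposition, the affine-bundle claim you flag as the main obstacle, the coproduct matching of $e(c)$ with the $q$-weights, and the Lang/Galois step for $d_f>1$ all unnecessary. Your alternative, a recursion on Jordan type mirroring a Pieri rule for $\tilde H_\lambda$, is not a small lemma but a project of weight comparable to the theorem itself.
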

Here $F_T(x)$ is a symmetric function that can be characterized in terms of the conjugacy class invariants of $T$ and admits an explicit formula in terms of modified Hall-Littlewood polynomials (see Proposition \ref{prop:ifgf}).

\begin{corollary}
\label{cor:simple_hessenberg}
    Let $T$ be a linear operator of similarity class type (see Definition \ref{def:sct}) $\{(d_1,\lambda^1), \ldots, (d_r,\lambda^r)\}$ and let $\mm = (\mm(1),n,n\ldots, n)$ be a Hessenberg function satisfying $\mm(2)=n$.  We have
    \[
    |\he(\mm,T)| = [n-2]_q!\Big([n]_q[\mm(1)-1]_q + q^{\mm(1)-1}[n-\mm(1)]_q\sum_{\substack{1\leq i\leq r\\ d_i=1}}[\ell(\lambda^i)]_q\Big).
    \]
Here $[m]_q$ denotes the $q$-analog of the integer $m$, while $\ell(\lambda)$ denotes the number of parts of $\lambda$. In particular, if $\mm(1)=n-1$, we recover the following result of Escobar, Precup and Shareshian \cite[Proposition 2]{EPS}:
    \[
    |\he(\mm,T)| = [n-2]_q!\Big([n-2]_q[n]_q+q^{n-2}\sum_{\substack{1\leq i\leq r \\ d_i=1}}[\ell(\lambda^i)]_q 
   \Big). \]    
\end{corollary}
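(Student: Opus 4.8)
The plan is to apply Theorem~\ref{thm:main}, then to compute the chromatic quasisymmetric function $X_{G(\mm)}(x;t)$ for this particular $\mm$, and finally to pin down the two Hall scalar products against $F_T$ that survive by specialising to the degenerate Hessenberg functions with $\mm(1)=n$ and $\mm(1)=1$. Write $m:=\mm(1)$. Since $\mm=(m,n,\dots,n)$, the graph $G(\mm)$ is the complete graph on the vertices $\{2,\dots,n\}$ together with a vertex $1$ adjacent to exactly $\{2,\dots,m\}$. Hence every proper colouring of $G(\mm)$ uses at least $n-1$ distinct colours, so the only monomials occurring in $X_{G(\mm)}(x;t)$ have content $(1^n)$ or $(2,1^{n-2})$, and therefore
\[
X_{G(\mm)}(x;t)=a(t)\,e_n+b(t)\,e_{n-1}e_1
\]
for uniquely determined $a,b\in\ZZ[t]$.

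To find $b(t)$ I would read off the coefficient of $x_1^2x_2\cdots x_{n-1}$: such a proper colouring assigns the repeated colour $1$ to a non-edge $\{1,j_0\}$ with $m<j_0\le n$ and colours the clique $\{2,\dots,n\}\setminus\{j_0\}$ injectively by the remaining colours, and summing $t^{\asc}$ over all of these — the clique contributes $[n-2]_t!$, the edges at vertex $1$ contribute a factor $t^{m-1}$, and the edges at $j_0$ a factor $t^{\,n-j_0}$ (both independent of the clique colouring), after which one sums over $j_0$ — gives $b(t)=t^{m-1}[n-m]_t\,[n-2]_t!$. To find $a(t)$, note that the coefficient of $x_1x_2\cdots x_n$ in $X_{G(\mm)}$ equals $a(t)+n\,b(t)$ and equals $P^{(m)}_n(t):=\sum_{\sigma\in S_n}t^{\asc_{G(\mm)}(\sigma)}$; conditioning on which vertex receives the largest colour $n$ gives the recursion
\[
P^{(m)}_n(t)=[n-1]_t!+t[m-1]_t\,P^{(m-1)}_{n-1}(t)+t^{m-1}[n-m]_t\,P^{(m)}_{n-1}(t),
\]
and an induction on $n$ then yields $P^{(m)}_n(t)=[n-2]_t!\big([n]_t[m-1]_t+n\,t^{m-1}[n-m]_t\big)$, whence $a(t)=[n]_t[m-1]_t\,[n-2]_t!$. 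Applying $\omega$ and using $\omega e_k=h_k$,
\[
\omega X_{G(\mm)}(x;q)=[n-2]_q!\big([n]_q[m-1]_q\,h_n+q^{m-1}[n-m]_q\,h_{n-1}h_1\big).
\]

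By Theorem~\ref{thm:main} we then have $|\he(\mm,T)|=[n-2]_q!\big([n]_q[m-1]_q\,\alpha+q^{m-1}[n-m]_q\,\beta\big)$, where $\alpha:=\langle F_T,h_n\rangle$ and $\beta:=\langle F_T,h_{n-1}h_1\rangle$ depend only on $T$. Putting $m=n$, the variety $\he(\mm,T)$ is the full flag variety of $\Fq^n$, so $[n]_q!=[n-2]_q!\,[n]_q[n-1]_q\,\alpha$ and hence $\alpha=1$. Putting $m=1$, $\he(\mm,T)$ consists of the complete flags whose first step is $T$-invariant, so it fibres over the set of $T$-invariant lines $L$ with fibre the complete flags of $\Fq^n/L$; thus $|\he(\mm,T)|$ is $[n-1]_q!$ times the number of $T$-invariant lines, and since those lines are precisely the lines lying in some eigenspace $\ker(T-a)$ with $a\in\Fq$, their number is $\sum_{a\in\Fq}[\dim\ker(T-a)]_q=\sum_{d_i=1}[\ell(\lambda^i)]_q$; comparing with the formula gives $\beta=\sum_{d_i=1}[\ell(\lambda^i)]_q$. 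Substituting $\alpha$ and $\beta$ gives the asserted formula, and setting $m=n-1$ yields the Escobar--Precup--Shareshian identity displayed in the statement.

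The crux is the explicit determination of $X_{G(\mm)}(x;t)$, and in particular the verification that $P^{(m)}_n(t)$ has the claimed closed form: the reduction to two coefficients and the derivation of the recursion are routine, but the inductive identity has to be checked with some care. Alternatively one can get $a(t)=(-1)^{n-1}\langle X_{G(\mm)},p_n\rangle$ from $\langle e_{n-1}e_1,p_n\rangle=0$, trading the induction for a power-sum expansion of the chromatic quasisymmetric function; or one can substitute the modified Hall--Littlewood formula for $F_T$ of Proposition~\ref{prop:ifgf} directly into $\langle F_T,h_n\rangle$ and $\langle F_T,h_{n-1}h_1\rangle$ — but routing through the cases $\mm(1)\in\{1,n\}$ avoids that computation altogether.
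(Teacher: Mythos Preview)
Your proof is correct and follows the same overall strategy as the paper: apply Theorem~\ref{thm:main} together with the $e$-expansion of $X_{G(\mm)}$ for $\mm=(\mm(1),n,\dots,n)$, then evaluate the two surviving Hall scalar products.

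There are two points of difference worth noting. First, the paper simply quotes the $e$-expansion of $X_{G(\mm)}$ from \cite[Theorem~4.2]{CH18}, whereas you derive it from scratch via the coefficient of $x_1^2x_2\cdots x_{n-1}$ and the recursion for $P^{(m)}_n(t)$; your argument is self-contained but longer. Second, the paper reads off $\langle F_T,h_n\rangle=1$ and $\langle F_T,h_{n-1,1}\rangle=s$ directly from the definition $F_T=\sum_\lambda \fl_\lambda(T)\,m_\lambda$ and the duality $\langle m_\lambda,h_\mu\rangle=\delta_{\lambda\mu}$ (so $\fl_{(n)}(T)=1$ and $\fl_{(n-1,1)}(T)$ is the number of $T$-invariant hyperplanes, equivalently lines), while you recover these two values by specializing the formula at $\mm(1)=n$ and $\mm(1)=1$ and interpreting the resulting Hessenberg varieties geometrically. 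Your route is pleasant but circuitous: once one remembers that $\langle F_T,h_\mu\rangle=\fl_\mu(T)$ by definition, the values of $\alpha$ and $\beta$ are immediate and no specialization is needed.
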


 The modular law (Definition~\ref{def:modular}) plays a crucial role in our elementary and combinatorial proof of Theorem \ref{thm:main}. As a corollary of Theorem \ref{thm:main} we obtain, in Section \ref{sec:main}, explicit formulas involving modified Hall-Littlewood polynomials for the Poincaré polynomials of complex Hessenberg varieties (Corollary \ref{cor:poinc}). Tymoczko \cite[Thm. 7.1]{MR2275912} has already given combinatorial formulas for these polynomials. Our result is then used to give a new proof of a theorem of Brosnan and Chow (see Corollary \ref{cor:poinreg}) on the monomial coefficients of the omega dual of the chromatic quasisymmetric function. This theorem is known to imply the fact that the Betti numbers of complex regular Hessenberg varieties form a palindromic sequence.

The paper is organized as follows. In Section \ref{sec:ifgf} we discuss the symmetric function $F_T(x)$ appearing in the proof of the main theorem and show how it can be expressed in terms of conjugacy class invariants of $T$. We give examples to show that several natural bases of symmetric functions such as the power sum, complete homogeneous and modified Hall-Littlewood polynomials arise as $F_T(x)$ for suitable choices of $T$. In Section \ref{sec:main} we prove the main theorem and discuss applications to Poincaré polynomials of complex Hessenberg varieties.  In  the Appendix we use Theorem \ref{thm:main} to derive a combinatorial formula for modified Hall-Littlewood polynomials. 

\section{Invariant flag generating functions}\label{sec:ifgf}
Given a linear operator $T$ on $\Fq^n$, and an integer partition $\lambda=(\lambda_1,\lambda_2,\ldots,\lambda_\ell)$ of $n$, a $\lambda$-flag in $\Fq^n$ is a sequence  $(0)=W_0\subseteq W_1\subseteq \cdots \subseteq W_\ell$ of subspaces of $\Fq^n$ such that $\dim W_i/W_{i-1}=\lambda_i$ for each $1\leq i\leq \ell$. Let $\mathcal{F}_\lambda(T)$ denote the collection of $\lambda$-flags of $T$-invariant subspaces of $\Fq^n$. We can now define the symmetric function associated to $T$. 

\begin{definition}
  The \emph{invariant flag generating function} of $T$ is defined by
  \begin{align*}
    F_T(x):=\sum_{\lambda \vdash n}\fl_\lambda(T)m_\lambda,
  \end{align*}
  where the sum is over all integer partitions $\lambda$ of $n$ and $\fl_\lambda(T):=|\mathcal{F}_\lambda(T)|,$ while $m_\lambda$ denotes the monomial symmetric function indexed by $\lambda.$ 
\end{definition}

The symmetric function $F_T(x)$ was introduced in \cite{ram2023subspace} in connection with a problem posed in 1992 by Bender, Coley, Robbins and Rumsey on subspace profiles. Special cases of this problem had been studied in several papers \cite{MR2831705,split,MR3093853,MR4263652,MR4349887, MR4555237,MR4682040,MR4797454, ram2023diagonal} with the general solution appearing in \cite{ram2023subspace}. One can express $F_T(x)$ explicitly in terms of \emph{modified Hall-littlewood polynomials}. Recall that the modified Hall-Littlewood polynomial $\tilde{H}_\lambda(x;t)$ is defined by
\begin{align*}
  \tilde{H}_\lambda(x;t):=\sum_{\mu}\tilde{K}_{\mu\lambda}(t)s_\mu,
\end{align*}
where $s_\mu$ denotes a Schur function and $\tilde{K}_{\mu\lambda}(t)$ is a modified Kostka-Foulkes polynomial, defined as the generating polynomial of the cocharge statistic on semistandard tableaux:
\begin{align*}
  \tilde{K}_{\mu\lambda}(t)=\sum_{\mathcal{T} \in {\rm SSYT(\mu,\lambda)}}t^{\rm cocharge{(\mathcal{T})}}.
\end{align*}
Modified Hall-Littlewood polynomials are very combinatorial in nature and occur as Frobenius images of suitably defined graded vector spaces. When $q$ is a prime power, the specialization $\tilde{K}_{\lambda\mu}(q)$ equals the value taken by the character of the irreducible unipotent ${\rm GL}_n(\Fq)$-representation indexed by $\lambda$ on a unipotent element with Jordan form partition $\mu$ (see Lusztig \cite[Eq. 2.2]{MR641425}). When $\lambda=(n)$ has a single part, we have $\tilde{H}_\lambda(x;t)=h_n$, the complete homogeneous symmetric function.

One can derive an expression  for $F_T(x)$ involving modified Hall-Littlewood polynomials in terms of the conjugacy class data of $T$. Recall that the action of $T$ on $\Fq^n$ makes $\Fq^n$ into an $\Fq[t]$-module which is isomorphic to a direct sum
\begin{align*}
  \Fq^n\simeq \bigoplus_{i=1}^r\bigoplus_{j=1}^{\ell_i}\frac{\Fq[t]}{( f_i^{\lambda_{i,j}})},
\end{align*}
where $f_i(t)\in \Fq[t]$ are distinct monic irreducible polynomials and $\lambda^i=(\lambda_{i,1}, \lambda_{i,2},\ldots)$ is an integer partition for $1\leq i\leq r.$ The conjugacy class of $T$ is uniquely determined by the polynomials $f_i$ and the partitions $\lambda^i(1\leq i\leq r)$. 
\begin{definition}\label{def:sct}
    Writing $d_i$ for the degree of $f_i$, the \emph{similarity class type} of $T$ is defined to be the multiset $\tau=\{(d_1,\lambda^1),\ldots,(d_r,\lambda^r)\}$.
\end{definition}
The notion of similarity class type goes back to the work of Green \cite{MR72878} on the characters of the finite general linear groups. To each similarity class type $\tau$, we associate a symmetric function
\begin{align*}
 F_\tau(x;t):= \prod_{i=1}^r \tilde{H}_{\lambda^i}(x_1^{d_i},x_2^{d_i},\ldots;t^{d_i}) = \prod_{i=1}^r p_{d_i}[\tilde{H}_{\lambda^i}(x;t)],
\end{align*}
where the square brackets indicate plethystic substitution. We have the following result  \cite[Prop. 2.11]{ram2023subspace}.
\begin{proposition}\label{prop:ifgf}
If $T$ is an operator over $\Fq$ of similarity class type $\tau$, then
\begin{equation*}
  F_T(x)=F_\tau(x;q).
\end{equation*}
\end{proposition}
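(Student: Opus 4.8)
The plan is to peel the statement apart one structural layer at a time. The first step is to rewrite $F_T(x)$ as a generating function over \emph{all} chains of $T$-invariant subspaces. Extending the notation, let $\fl_\alpha(T)$ count the $T$-invariant flags whose sequence of dimension gaps is a prescribed composition $\alpha$ of $n$; one checks, via the dimension-reversing duality $W\mapsto W^\perp$ on invariant subspaces (compare \cite{ram2023subspace}), that $\fl_\alpha(T)$ depends only on the partition $\mathrm{sort}(\alpha)$, and hence
\[
F_T(x)=\sum_{0=W_0\subseteq W_1\subseteq W_2\subseteq\cdots}\ \prod_{k\geq1}x_k^{\dim W_k/W_{k-1}},
\]
the sum being over weakly increasing chains of $T$-invariant subspaces of $\Fq^n$ that eventually equal $\Fq^n$. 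Writing $\Fq^n=\bigoplus_{i=1}^r V_i$ for the primary decomposition, where $V_i$ is the $f_i$-primary part and $T|_{V_i}$ has similarity class type $\{(d_i,\lambda^i)\}$, every invariant subspace splits as $W=\bigoplus_i(W\cap V_i)$, so $(W_k)_k\mapsto\bigl((W_k\cap V_i)_k\bigr)_i$ is a bijection between chains in $\Fq^n$ and tuples of chains in the $V_i$ under which the $k$-th dimension gap is additive in $i$. Substituting into the display gives the multiplicativity $F_T(x)=\prod_{i=1}^r F_{T|_{V_i}}(x)$, so by the product formula for $F_\tau$ it suffices to prove $F_{T|_{V_i}}(x)=\tilde H_{\lambda^i}(x_1^{d_i},x_2^{d_i},\ldots;q^{d_i})$ for each $i$.

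The second step handles a single primary component. Dropping the index, let $f$ be the relevant irreducible polynomial (of degree $d$), $\mu=\lambda^i$, and $V\cong\bigoplus_j\Fq[t]/(f^{\mu_j})$. The action of $T$ factors through the finite local $\Fq$-algebra $A:=\Fq[t]/(f^{\mu_1})$, which has residue field $\Fq[t]/(f)\cong\mathbb F_{q^d}$ and principal maximal ideal $\bar fA$. Since $\mathbb F_{q^d}/\Fq$ is separable, Hensel's lemma (applied to the minimal polynomial of a generator of $\mathbb F_{q^d}$) lifts the residue field to a coefficient subfield, making $A$ an $\mathbb F_{q^d}$-algebra; as $1,\bar f,\dots,\bar f^{\mu_1-1}$ is then an $\mathbb F_{q^d}$-basis, $A\cong\mathbb F_{q^d}[s]/(s^{\mu_1})$ with $s\mapsto\bar f$. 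Under this identification the $\Fq[t]$-submodules of $V$ are exactly the $\mathbb F_{q^d}$-subspaces stable under $\bar f$, which acts as a nilpotent operator $N'$ of Jordan type $\mu$ over $\mathbb F_{q^d}$, while $\dim_{\Fq}=d\cdot\dim_{\mathbb F_{q^d}}$ throughout. Hence an invariant $\lambda$-flag of $V$ exists only when $d\mid\lambda_j$ for all $j$, in which case it corresponds to an $N'$-invariant $(\lambda/d)$-flag over $\mathbb F_{q^d}$, so that
\[
F_{T|_V}(x)=\sum_{\nu}\fl_\nu(N')\,m_\nu(x_1^d,x_2^d,\ldots)=F_{N'}(x_1^d,x_2^d,\ldots).
\]
Thus everything reduces to the nilpotent case $F_{N_\mu}(x)=\tilde H_\mu(x;q)$ over an arbitrary $\Fq$; applied over $\mathbb F_{q^d}$ it yields $F_{T|_V}(x)=\tilde H_\mu(x_1^d,x_2^d,\ldots;q^d)$, the desired factor.

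For the nilpotent case I would count flags as fixed points on partial flag varieties. The unipotent element $u:=I+N_\mu\in\mathrm{GL}_n(\Fq)$ fixes precisely the $N_\mu$-invariant subspaces, so $\fl_\lambda(N_\mu)$ equals the number of $u$-fixed $\Fq$-points of $\mathrm{GL}_n/P_\lambda$, that is, the value at $u$ of the permutation character of $\mathrm{GL}_n(\Fq)$ on $\mathrm{GL}_n/P_\lambda$. This permutation character is $\sum_\nu K_{\nu\lambda}\chi^\nu$, a sum of irreducible unipotent characters with Kostka-number multiplicities (the $q$-analogue of Young's rule), and $\chi^\nu(u)=\tilde K_{\nu\mu}(q)$ by Lusztig's formula recalled above. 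Hence $\fl_\lambda(N_\mu)=\sum_\nu K_{\nu\lambda}\tilde K_{\nu\mu}(q)$, and summing against $m_\lambda$ and using $s_\nu=\sum_\lambda K_{\nu\lambda}m_\lambda$ gives
\[
F_{N_\mu}(x)=\sum_\nu\tilde K_{\nu\mu}(q)\Bigl(\sum_\lambda K_{\nu\lambda}m_\lambda\Bigr)=\sum_\nu\tilde K_{\nu\mu}(q)s_\nu=\tilde H_\mu(x;q).
\]

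The hard part is this last step: it is the only place where the combinatorics of (modified) Hall--Littlewood polynomials genuinely enters, and the argument above leans on two nontrivial external inputs — the decomposition of the $\mathrm{GL}_n(\Fq)$-module on $\mathrm{GL}_n/P_\lambda$ into unipotent characters with Kostka multiplicities, and Lusztig's interpretation of $\tilde K_{\nu\mu}(q)$ as a unipotent character value. An alternative that stays within Hall-polynomial theory is to prove $\fl_\lambda(N_\mu)=\sum_\nu K_{\nu\lambda}\tilde K_{\nu\mu}(q)$ by induction on $\ell(\lambda)$, splitting off the first subspace of a flag; this is essentially how the statement is obtained in \cite{ram2023subspace}. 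The remaining ingredients — the sorting-invariance of $\fl_\alpha(T)$ that licenses the chain reformulation, the verification that the primary-component splitting is a bijection on chains with additive gaps, and the construction of a coefficient field inside $A$ — are routine but must be stated with care.
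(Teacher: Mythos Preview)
The paper does not prove this proposition; it is quoted from \cite[Prop.~2.11]{ram2023subspace}. Your argument is correct and follows essentially the proof given there: multiplicativity over primary components, then reduction of an $f$-primary component to a nilpotent operator over $\mathbb{F}_{q^d}$ via a coefficient-field lifting inside $\Fq[t]/(f^{\mu_1})$, and finally the identification $F_{N_\mu}=\tilde H_\mu(x;q)$.

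One small point of care: the sorting-invariance $\fl_\alpha(T)=\fl_{\mathrm{sort}(\alpha)}(T)$ does not follow directly from a single global map $W\mapsto W^\perp$, since that intertwines $T$ with its adjoint rather than with $T$ itself. The clean argument is local: fix $W_{i-1}\subset W_{i+1}$, pass to the induced operator $\bar T$ on $W_{i+1}/W_{i-1}$, and combine duality on that quotient with the fact that $\bar T$ is similar to its transpose to see that the number of $\bar T$-invariant subspaces of dimension $\alpha_i$ equals the number of dimension $\alpha_{i+1}$. With that adjustment the chain reformulation of $F_T(x)$ and the remainder of your proof go through as written. Your character-theoretic route to the nilpotent case (via $\mathrm{Ind}_{P_\lambda}^{\mathrm{GL}_n}\mathbf 1=\sum_\nu K_{\nu\lambda}\chi^\nu$ and Lusztig's formula $\chi^\nu(u)=\tilde K_{\nu\mu}(q)$, the latter already recalled in the paper) is a legitimate alternative to the Hall-polynomial induction used in \cite{ram2023subspace}.
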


Several well-studied bases of the ring of symmetric functions arise as invariant flag generating functions of linear operators (see \cite[Sec. 2]{ram2023subspace}). We give a few examples here.
\begin{example}
  An operator $T$ is \emph{triangulable} if there exists a basis of $\Fq^n$ with respect to which the matrix of $T$ is upper triangular (equivalently, its characteristic polynomial splits completely in $\Fq$). Let $a_i(1\leq i\leq r)$ be the distinct eigenvalues of $T$. The \emph{Jordan type} of $T$ is the multiset $\Lambda=\{\lambda^i\}_{1\leq i\leq r}$, where $\lambda^i$ is the integer partition corresponding to $a_i$ in the Jordan canonical form of any matrix for $T$. In this case, $F_T(x)=\prod_{i=1}^r \tilde{H}_{\lambda^i}(x;q)$. 
\end{example}

We record below two specific instances of the above example that are worth noting. 

\begin{example}\label{eg:nilpotent}
  If $T$ is nilpotent over $\Fq$ with Jordan form partition $\lambda,$ then its invariant flag generating function is given by $ F_T(x)=\tilde{H}_\lambda(x;q).$ 
\end{example}

\begin{example}\label{eg:regularsplit}
  An operator $T$ is \emph{regular split} if the corresponding $\Fq[t]$ module on $\Fq^n$ is isomorphic to a direct sum
  $$
\bigoplus_{i=1}^r \frac{\Fq[t]}{(x-a_i)^{\alpha_i}},
$$
for distinct elements $a_i\in \Fq$  and positive integers $\alpha_i(1\leq i\leq r)$. Such an operator is characterized by the fact that its minimal and characteristic polynomials coincide and each is a product of linear factors over $\Fq.$ In this case we have $F_T=h_\alpha$, the complete homogeneous symmetric function indexed by the composition $\alpha$ whose parts are the $\alpha_i(1\leq i\leq r)$.
\end{example}

\begin{example}\label{eg:regularsemisimple}
  A linear operator $T$ on $\Fq^n$ is \emph{regular semisimple} if its characteristic polynomial is a product of distinct irreducible polynomials over $\Fq.$ In this case the corresponding $\Fq[t]$ module is a direct sum
  $$
\bigoplus_{i=1}^r \frac{\Fq[t]}{(f_i)},
$$
for distinct irreducible polynomials $f_i\in \Fq[t].$ If we write $\alpha_i=\deg f_i$ for $1\leq i\leq r$, then $F_T(x)=\prod_{i=1}^r p_{\alpha_i}=p_\alpha$, the power sum symmetric function indexed by the composition $\alpha=(\alpha_1,\alpha_2,\ldots)$.
\end{example}

\section{The main result}\label{sec:main}
We begin with a discussion of the modular law \cite[Defn. 2.1]{MR4199388} which plays a pivotal role in the proof of the main theorem. For convenience we sometimes denote a Hessenberg function $\mm:[n]\to [n]$ by the tuple $(\mm(1),\ldots,\mm(n))$. A Dyck path of size $n$ in the Euclidean plane is lattice path from $(0,0)$ to $(n,n)$ that uses only north steps $(0,1)$ and east steps $(1,0)$ and never goes below the line $x=y$. To each Hessenberg function $\mm:[n]\to [n]$ we associate a unique Dyck path of size $n$ by requiring that there are precisely $\mm(i)$ north steps before the $i$-th east step (see Figure \ref{fig:hessenberg}). It will be convenient to identify Hessenberg functions with their corresponding Dyck paths. Denote by $\DD$ and $\DD_n$ the collection of all Dyck paths and Dyck paths of size $n$. 

\begin{figure}
  \centering
  \includegraphics{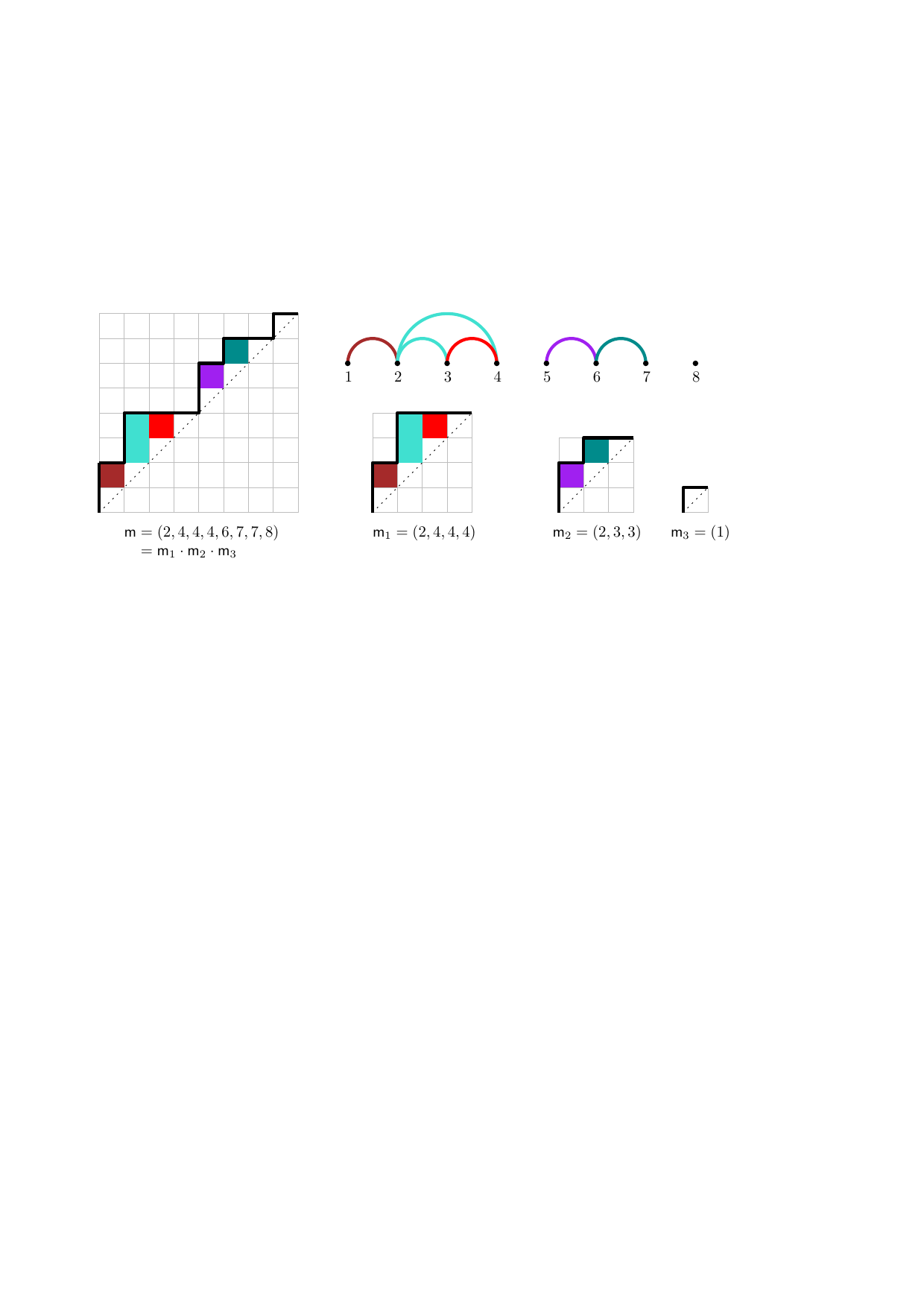}   
  \caption{Left: A Hessenberg function $\mm$ with the corresponding Dyck path; Right: The indifference graph $G(\mm)$ and the irreducible components $\mm_1,\mm_2,\mm_3$ of $\mm$.}
  \label{fig:hessenberg}
\end{figure}

Given Dyck paths $\mm_1$ and $\mm_2$ of sizes $n_1$ and $n_2$, their concatenation, denoted $\mm_1\cdot \mm_2$, is the path of size $n_1+n_2$ obtained by traversing all steps in $\mm_1$ followed by those in $\mm_2$.  A Dyck path is \emph{irreducible} if it cannot be written as a concatenation of nontrivial Dyck paths. It is easily seen that every Dyck path $\mm\in \DD$ decomposes uniquely as a concatenation of irreducible Dyck paths, called the irreducible components of $\mm$ (see Figure \ref{fig:hessenberg}).  The \emph{complete} Hessenberg function $k_n$ is the unique Hessenberg function in $\DD_n$ defined by $k_n(1)=n$.

\begin{definition}\label{def:modular}
Let $\mathcal{A}$ be a $\QQ(t)$-algebra. A function $f : \DD_n \to \mathcal{A}$ satisfies the \emph{modular law} if
\begin{equation*}
(1 + t) f(\mm_1) = tf(\mm_0) + f(\mm_2),
\end{equation*}
whenever one of the following conditions is satisfied.

\begin{enumerate}
    \item There exists $i \in [n-1]$ such that $\mm_1(i-1) < \mm_1(i) < \mm_1(i+1)$ and $\mm_1(\mm_1(i)) = \mm_1(\mm_1(i)+1)$. In addition, for each $k \in \{0, 2\}$, we have $\mm_k(j) = \mm_1(j)$ for every $j \neq i$  while $\mm_k(i) = \mm_1(i) - 1 + k$.
    
    \item There exists $i \in [n-1]$ such that $\mm_1(i+1) = \mm_1(i) + 1$ and $\mm_1^{-1}(i) = \emptyset$. Moreover, for each $k \in \{0, 2\}$, we have $\mm_k(j) := \mm_1(j)$ for every $j \neq i, i+1$, while $\mm_0(i) = \mm_0(i+1) = \mm_1(i)$ and $\mm_2(i) = \mm_2(i+1) = \mm_1(i+1)$.
\end{enumerate}
\end{definition}

For a Hessenberg function $\mm:[n]\to [n]$, write
\begin{align*}
  X_{G(\mm)}(x;t)=\sum_{\lambda \vdash n}a_{t,\lambda}(\mm)e_\lambda.
\end{align*}
Denote by $[n]_t:=1+t+\cdots+t^{n-1}$ the $t$-analog of the integer $n$ and write $[n]_t!$ for the product $\prod_{j=1}^n[j]_t.$ We require the following result \cite[Cor. 3.2]{MR4199388}.
\begin{proposition}\label{prop:base}
  Suppose $\mathcal{A}$ is a $\QQ(t)$-algebra and let $f:\DD\to \mathcal{A}$ be a function satisfying the modular law. Then
  \begin{align*}
    f(\mm)=\sum_{\lambda \vdash n}\frac{a_{t,\lambda}(\mm)}{[\lambda]_t!}f(k_\lambda),
  \end{align*}
  where $k_\lambda=k_{\lambda_1}\cdot k_{\lambda_2}\cdots$ and $[\lambda]_t!$ denotes the product $\prod_{j\geq 1}[\lambda_j]_t!$.
\end{proposition}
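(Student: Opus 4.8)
The plan is to recast the statement as a single linear identity in a universal quotient space, and then to feed in two known facts about the chromatic quasisymmetric function. Fix $n$, let $V_n$ be the $\QQ(t)$-vector space with basis $\DD_n$, let $R_n\subseteq V_n$ be the subspace spanned by all vectors $(1+t)\mm_1-t\mm_0-\mm_2$ arising from an instance of the modular law (Definition~\ref{def:modular}), and set $Q_n:=V_n/R_n$. Unwinding the definitions, a function $f\colon\DD\to\mathcal{A}$ satisfies the modular law exactly when, for every $n$, the linear extension of $f|_{\DD_n}$ annihilates $R_n$, that is, factors as $f(\mm)=\bar f([\mm])$ for a linear map $\bar f\colon Q_n\to\mathcal{A}$. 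Hence it is enough to prove, for every $\mm\in\DD_n$, the identity
\begin{equation*}
  [\mm]=\sum_{\lambda\vdash n}\frac{a_{t,\lambda}(\mm)}{[\lambda]_t!}\,[k_\lambda]\qquad\text{in }Q_n.
\end{equation*}
Applying $\bar f$ to this identity then gives the proposition. (The restriction of a modular $f$ to each $\DD_n$ is again modular, since every instance of the modular law keeps the size fixed, so there is no loss in working one degree at a time.)

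I would deduce this identity from three inputs. First, a \emph{generation lemma}: the classes $[k_\lambda]$ with $\lambda\vdash n$ span $Q_n$. Second, the known fact that the map $\mm\mapsto X_{G(\mm)}$ itself satisfies the modular law, so that it descends to a linear map $\bar X$ from $Q_n$ to the ring of symmetric functions over $\QQ(t)$, with $\bar X([\mm])=X_{G(\mm)}=\sum_\lambda a_{t,\lambda}(\mm)e_\lambda$. Third, a base computation: $\bar X([k_\lambda])=[\lambda]_t!\,e_\lambda$. For the third point, $G(k_m)$ is the complete graph on $[m]$, whose proper colorings are the injections $[m]\to\PP$; summing $t^{\asc(\kappa)}$ over the $m!$ proper colorings with a fixed $m$-element image set yields $[m]_t!$, so $X_{G(k_m)}=[m]_t!\,e_m$; and since $G(\mm_1\cdot\mm_2)$ is the disjoint union of $G(\mm_1)$ with a shift of $G(\mm_2)$, the invariant $X_{G(\cdot)}$ is multiplicative under concatenation, whence $X_{G(k_\lambda)}=\prod_j\bigl([\lambda_j]_t!\,e_{\lambda_j}\bigr)=[\lambda]_t!\,e_\lambda$. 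Now assemble: the vectors $\bar X([k_\lambda])=[\lambda]_t!\,e_\lambda$ are linearly independent (the $e_\lambda$ are, and each $[\lambda]_t!\neq 0$), so together with the generation lemma this forces the set $\{[k_\lambda]:\lambda\vdash n\}$ to be a basis of $Q_n$ and $\bar X$ to be injective on $Q_n$. Applying $\bar X$ to the right-hand side of the displayed identity produces $\sum_\lambda\frac{a_{t,\lambda}(\mm)}{[\lambda]_t!}\,[\lambda]_t!\,e_\lambda=X_{G(\mm)}=\bar X([\mm])$, so the identity holds in $Q_n$ by injectivity, and then applying $\bar f$ completes the argument.

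The main obstacle is the generation lemma. Here I would induct on $n$ and, for fixed $n$, use the two forms of the modular law to rewrite $[\mm]$, whenever $\mm$ is not already of the form $k_\lambda$, as a $\QQ(t)$-combination of classes of ``simpler'' Dyck paths: roughly, move~(1) straightens a strict local increase $\mm(i-1)<\mm(i)<\mm(i+1)$ at a position whose partner $\mm(i)$ is flat, $\mm(\mm(i))=\mm(\mm(i)+1)$, while move~(2) handles the interaction of $\mm$ with its decomposition into irreducible components, letting one detach and permute complete blocks $k_a$ and eventually sort them into partition order. The genuinely delicate point is that not every $\mm\neq k_\lambda$ occurs as the central term $\mm_1$ of an applicable instance of the modular law --- for example $k_1\cdot k_4=(1,5,5,5,5)$, a concatenation of complete paths but not in partition order, admits no instance with itself in the middle --- so such ``terminal'' paths must instead be reached by solving a relation centered at a neighbouring path for one of its outer terms $\mm_0$ or $\mm_2$. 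Consequently the induction cannot be run on a crude statistic such as the area between the Dyck path and the diagonal; the technical heart of the argument is to exhibit a partial order on $\DD_n$ together with, for each $\mm\neq k_\lambda$, a specific relation expressing $[\mm]$ through strictly smaller classes, and to verify that iterating terminates at the $[k_\lambda]$. Granting the generation lemma, and granting from the literature the modularity of the map $\mm\mapsto X_{G(\mm)}$, the remaining steps --- the base computation and the linear algebra --- are routine.
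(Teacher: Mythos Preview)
The paper does not prove this proposition itself; it is quoted from \cite[Cor.~3.2]{MR4199388}. Your outline is exactly the strategy used there: pass to the quotient $Q_n$ of the free $\QQ(t)$-module on $\DD_n$ by the modular relations, show that the classes $[k_\lambda]$ span $Q_n$, and then use the known modularity of $\mm\mapsto X_{G(\mm)}$ together with the computation $X_{G(k_\lambda)}=[\lambda]_t!\,e_\lambda$ to see that the $[k_\lambda]$ are linearly independent, hence a basis on which $\bar X$ is injective. The linear-algebra wrap-up you give is clean and correct.

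You correctly isolate the generation lemma as the entire difficulty, and your example $k_1\cdot k_4=(1,5,5,5,5)$ showing that a na\"ive induction (using only relations with $\mm$ as the middle term $\mm_1$) cannot work is well chosen. In the cited reference this step is carried out by an explicit rewriting procedure with a termination argument, reducing any $[\mm]$ to a combination of the $[k_\lambda]$; your sketch is compatible with that. So the proposal is a sound plan, but be aware that making the generation step precise is where essentially all of the work lies, and your final paragraph is a sketch of a strategy rather than a proof.
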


Given $\mm$ and $T$, define
\begin{equation*}
  f(\mm,T):=|\he(\mm,T)|,
\end{equation*}
the number of points on the corresponding Hessenberg variety.

\begin{lemma}\label{lem:modular}
 If $\mm_0,\mm_1,\mm_2$ are Hessenberg functions satisfying the hypotheses of the modular law and $T$ is a linear operator over $\Fq$, then
  \begin{align*}
    (1+q)f(\mm_1,T)=qf(\mm_0,T)+f(\mm_2,T).
  \end{align*}
\end{lemma}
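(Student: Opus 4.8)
The plan is to prove both instances of the modular law by the same device: for each of $\mm_0,\mm_1,\mm_2$ I would realize $\he(\mm_k,T)$ as the total space of a forgetful projection that deletes one well-chosen subspace from the flag, and then count fibres — which will always turn out to be either all of $\PP^1(\Fq)$ (so $q+1$ points) or a single point. Here ``flag'' means $\Fq$-rational flag and every subspace is an $\Fq$-subspace, so $f(\mm,T)=|\he(\mm,T)|$ is literally a count of such objects.

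\emph{Case 1.} Put $m=\mm_1(i)$; the hypotheses force $i<m<n$ and $\mm_1(i-1)\le m-1$, give $\mm_1(m)=\mm_1(m+1)=:\ell$, and imply $i$ is the unique index with $\mm_1(\cdot)=m$. I would project by deleting $V_m$. Since $V_m\subseteq V_{m+1}$, the index-$m$ condition $TV_m\subseteq V_\ell$ follows from the index-$(m+1)$ condition $TV_{m+1}\subseteq V_\ell$, so it never constrains the fibre; the index-$i$ condition $TV_i\subseteq V_{\mm_k(i)}$ specializes to $TV_i\subseteq V_{m-1}$, $TV_i\subseteq V_m$, $TV_i\subseteq V_{m+1}$ for $k=0,1,2$, so it mentions the deleted $V_m$ only for $k=1$. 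Let $\hat B$ be the set of $V_m$-deleted flags satisfying all conditions at indices $\ne i,m$ together with $TV_i\subseteq V_{m+1}$. Then the projection sends $\he(\mm_2,T)$ onto $\hat B$ with every fibre $\PP^1(\Fq)$, and sends $\he(\mm_0,T)$ onto $\hat B_{-1}:=\hat B\cap\{TV_i\subseteq V_{m-1}\}$ again with every fibre $\PP^1(\Fq)$, so $f(\mm_2,T)=(q+1)|\hat B|$ and $f(\mm_0,T)=(q+1)|\hat B_{-1}|$. For $\he(\mm_1,T)$ the fibre over a point of $\hat B$ is $\{V_m:V_{m-1}\subseteq V_m\subseteq V_{m+1},\ TV_i\subseteq V_m\}$, and a short computation inside the $2$-dimensional space $V_{m+1}/V_{m-1}$ shows this has $q+1$ points when $TV_i\subseteq V_{m-1}$, exactly one point when $V_{m-1}\subsetneq TV_i+V_{m-1}\subsetneq V_{m+1}$, and none when $TV_i+V_{m-1}=V_{m+1}$.

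The crux, and the step I expect to be the real obstacle, is to show that on $\hat B$ the last, degenerate case never happens, i.e. $TV_i+V_{m-1}\neq V_{m+1}$ always. This is exactly where the two ancillary hypotheses of Case 1 are used: from $\mm_1(i-1)\le m-1$ we get $TV_{i-1}\subseteq V_{m-1}$, so for any $v\in V_i\setminus V_{i-1}$ we have $TV_i+V_{m-1}=V_{m-1}+\langle Tv\rangle$, which has dimension $\le m<m+1=\dim V_{m+1}$. Granting this, $\hat B=\hat B_{-1}\sqcup\hat B_0$ with $\hat B_0$ the locus $V_{m-1}\subsetneq TV_i+V_{m-1}\subsetneq V_{m+1}$, whence $f(\mm_1,T)=(q+1)|\hat B_{-1}|+|\hat B_0|$; substituting the three counts and $|\hat B|=|\hat B_{-1}|+|\hat B_0|$ into $(1+q)f(\mm_1,T)$ and $qf(\mm_0,T)+f(\mm_2,T)$ verifies the identity.

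\emph{Case 2.} The same scheme applies with $V_i$ deleted instead: $\mm_1^{-1}(i)=\emptyset$ makes $V_i$ occur only in the index-$i$ condition, which (when $\mm_k(i)=\mm_k(i+1)$, i.e. for $k=0,2$) follows from the index-$(i+1)$ condition since $V_i\subseteq V_{i+1}$. With $m=\mm_1(i)$, so $\mm_1(i+1)=m+1$, let $B$ be the set of $V_i$-deleted flags satisfying all conditions at indices $\ne i,i+1$ and $TV_{i+1}\subseteq V_{m+1}$, and $B_0=B\cap\{TV_{i+1}\subseteq V_m\}$. Then $\he(\mm_2,T)$ projects onto $B$ with fibres $\PP^1(\Fq)$, $\he(\mm_0,T)$ projects onto $B_0$ with fibres $\PP^1(\Fq)$, and $\he(\mm_1,T)$ projects onto $B$ with fibre over a point equal to the lines of $V_{i+1}/V_{i-1}$ killed by the map $\bar T\colon V_{i+1}/V_{i-1}\to V_{m+1}/V_m$ induced by $T$ (well defined since $TV_{i-1}\subseteq V_m$ and $TV_{i+1}\subseteq V_{m+1}$) — all $q+1$ over $B_0$, where $\bar T=0$, and exactly one over $B\setminus B_0$, where $\bar T$ has rank one. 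So $f(\mm_0,T)=(q+1)|B_0|$, $f(\mm_2,T)=(q+1)|B|$, $f(\mm_1,T)=q|B_0|+|B|$, and $(1+q)f(\mm_1,T)=qf(\mm_0,T)+f(\mm_2,T)$ again by arithmetic. Apart from the non-degeneracy lemma in Case 1, the only delicate part in either case is the routine bookkeeping that the untouched Hessenberg conditions genuinely descend to the base of the chosen projection and that each projection is onto the claimed base; once that is pinned down the proof reduces to counting lines in a $2$-dimensional $\Fq$-vector space.
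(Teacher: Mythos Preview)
Your proof is correct and follows essentially the same strategy as the paper's: both delete $V_{\mm_1(i)}$ from the flag in Case~1 and verify the modular relation by comparing the fibres of the resulting forgetful maps, which are always empty, a point, or $\PP^1(\Fq)$. You are more explicit than the paper about the non-degeneracy step (that $TV_i+V_{m-1}\neq V_{m+1}$ on the base, using $TV_{i-1}\subseteq V_{m-1}$) and you spell out Case~2 in full, whereas the paper simply declares it ``analogous''.
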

\begin{proof}
We follow \cite[Example 3.5]{AN_haiman}. Assume that $\mm_0,\mm_1,\mm_2$ satisfy condition (1) in Definition \ref{def:modular} for some $i$ and set $\ell=\mm_1(i)$. Consider, for $r=0,1,2$, the forgetful maps
\[
\alpha_r\colon  \he(\mm_r,T)\to  \{V_1\subset \ldots \subset V_{\ell-1}\subset V_{\ell+1}\subset \ldots V_{n-1}\subset\mathbb{F}_q^n\},
\]
where $\dim(V_i)=i$. 

We claim that $(1+q)|\alpha_1^{-1}(V_\bullet)|=q|\alpha_0^{-1}(V_\bullet)|+|\alpha_2^{-1}(V_\bullet)|$ for every flag 
\[
V_\bullet = V_1\subset \ldots \subset V_{\ell - 1} \subset V_{\ell+1}\subset \ldots, \subset V_{n-1}\subset \mathbb{F}_q^n.
\]

We have three cases to check, corresponding to whether the fiber  $\alpha_1^{-1}(V_\bullet)$ is empty, a point, or isomorphic to $\mathbb{P}^1_{\mathbb{F}_q}$.
\begin{itemize}
    \item  In the first case there exists $j\in [n]\setminus\{i\}$ such that $TV_j\not\subseteq V_{\mm_1(j)}$. In particular $\alpha_r^{-1}(V_\bullet)=\emptyset$ for $r=0,1, 2$. Remember that $\mm_r(j)=\mm_1(j)$ for every $r=0,2$ and $j\in[n]\setminus\{i\}$.

    \item In the second case, we have $TV_j\subseteq V_{\mm_1(j)}$ for every $j\in [n]\setminus\{i\}$ and $TV_i\not\subseteq V_{\ell-1}$. In this case we have $|\alpha_1^{-1}(V_\bullet)|=1$, $\alpha_0^{-1}(V_\bullet)=\emptyset$ and $\alpha_2^{-1}(V_\bullet)=\mathbb{P}^1_{\mathbb{F}_q}$. 

    \item In the final case, we have $TV_j\subseteq V_{\mm_1(j)}$ for every $j\in [n]\setminus\{i\}$ and $TV_i\subseteq V_{\ell-1}$. In this case $\alpha_0^{-1}(V_\bullet)=\alpha_1^{-1}(V_\bullet) = \alpha_2^{-1}(V_\bullet)=\mathbb{P}^1_{\mathbb{F}_q}$.
    
\end{itemize} 

In all cases we have the relation
\[
(1+q)|\alpha_1^{-1}(V_\bullet)|=q|\alpha_0^{-1}(V_\bullet)|+|\alpha_2^{-1}(V_\bullet)|.
\]
If $\mm_0,\mm_1,\mm_2$ satisfy condition (2) in Definition \ref{def:modular}, the proof is analogous. 
\end{proof}

\begin{theorem}
  For every linear operator $T$ on $\Fq^n$, the number of points on the Hessenberg variety $\he(\mm,T)$ is given by
  $$
|\he(\mm,T)|=\langle F_T(x), \omega X_{G(\mm)}(x;q)\rangle.
$$
\end{theorem}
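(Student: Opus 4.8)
The plan is to prove the identity by showing that both sides, viewed as functions of the Dyck path $\mm$ (with $T$ fixed), satisfy the modular law and agree on the complete Hessenberg functions $k_\lambda$; by Proposition~\ref{prop:base} this forces them to coincide for all $\mm$. Lemma~\ref{lem:modular} already establishes that the left-hand side $f(\mm,T)=|\he(\mm,T)|$ satisfies the modular law. So the first task is to check that $\mm\mapsto\langle F_T(x),\omega X_{G(\mm)}(x;q)\rangle$ also satisfies it. Since $\langle F_T(x),\omega(\cdot)\rangle$ is a fixed $\QQ(q)$-linear functional on symmetric functions, this reduces immediately to the fact that $\mm\mapsto X_{G(\mm)}(x;t)$ itself satisfies the modular law in the symmetric function algebra — this is essentially the original context of \cite{MR4199388} (indeed the expansion coefficients $a_{t,\lambda}(\mm)$ of $X_{G(\mm)}$ in the $e_\lambda$ basis are exactly the data entering Proposition~\ref{prop:base}), so I would cite that and move on.

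The second and main task is the base case: to verify that for every partition $\lambda\vdash n$,
\begin{equation*}
|\he(k_\lambda,T)|=\langle F_T(x),\,\omega X_{G(k_\lambda)}(x;q)\rangle.
\end{equation*}
Here $G(k_\lambda)$ is a disjoint union of complete graphs on blocks of sizes $\lambda_1,\lambda_2,\dots$, so $X_{G(k_\lambda)}(x;t)=\prod_j X_{K_{\lambda_j}}(x;t)$, and $X_{K_m}(x;t)=[m]_t!\,e_m$ (each proper coloring of $K_m$ uses $m$ distinct colors, and summing $t^{\asc}$ over the orderings of $m$ distinct values gives $[m]_t!$). Applying $\omega$ turns $e_m$ into $h_m$, so $\omega X_{G(k_\lambda)}(x;q)=\Big(\prod_j[\lambda_j]_q!\Big)h_\lambda=[\lambda]_q!\,h_\lambda$. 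On the other side, pairing against $F_T(x)=\sum_\mu\fl_\mu(T)m_\mu$ and using $\langle m_\mu,h_\lambda\rangle=\delta_{\mu\lambda}$ — more precisely $\langle h_\lambda,m_\mu\rangle=\delta_{\lambda\mu}$, extended to the case where $\lambda$ is a composition by sorting — yields $\langle F_T(x),\omega X_{G(k_\lambda)}(x;q)\rangle=[\lambda]_q!\,\fl_\lambda(T)$.

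So it remains to show $|\he(k_\lambda,T)|=[\lambda]_q!\,\fl_\lambda(T)$. This is the combinatorial heart of the argument. A complete flag $V_\bullet$ lies in $\he(k_\lambda,T)$ exactly when $TV_{\Lambda_j}\subseteq V_{\Lambda_j}$ for each partial sum $\Lambda_j=\lambda_1+\cdots+\lambda_j$ (the condition $TV_i\subseteq V_{k_\lambda(i)}$ is vacuous unless $i$ is such a partial sum, in which case it reads $TV_i\subseteq V_i$). Thus the flags in $\he(k_\lambda,T)$ are precisely the complete refinements of $T$-invariant $\lambda$-flags, so I would set up the forgetful map $\he(k_\lambda,T)\to\mathcal{F}_\lambda(T)$ sending $V_\bullet$ to $(V_0\subseteq V_{\Lambda_1}\subseteq\cdots)$ and count its fibers: the fiber over a $\lambda$-flag $W_0\subseteq W_1\subseteq\cdots$ is a product over $j$ of complete-flag varieties of the $\lambda_j$-dimensional space $W_j/W_{j-1}$, each of which has $[\lambda_j]_q!$ points. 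Hence each fiber has $\prod_j[\lambda_j]_q!=[\lambda]_q!$ points, giving $|\he(k_\lambda,T)|=[\lambda]_q!\,\fl_\lambda(T)$ as desired. The main obstacle is bookkeeping rather than conceptual: one must be careful that the Hessenberg condition for $k_\lambda$ really does decouple into the invariance conditions at the partial sums and nothing more, and that $\omega$ and the Hall pairing interact correctly with compositions versus partitions (resolved by noting $h_\lambda$, $m_\lambda$ depend only on the underlying partition). With the modular law, the base case, and Proposition~\ref{prop:base} in hand, the theorem follows by comparing coefficients in the expansion $|\he(\mm,T)|=\sum_{\lambda}\frac{a_{q,\lambda}(\mm)}{[\lambda]_q!}|\he(k_\lambda,T)|=\sum_\lambda a_{q,\lambda}(\mm)\fl_\lambda(T)=\langle F_T(x),\omega X_{G(\mm)}(x;q)\rangle$.
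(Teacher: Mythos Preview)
Your proposal is correct and follows essentially the same argument as the paper. The paper applies Proposition~\ref{prop:base} and Lemma~\ref{lem:modular} directly to $|\he(\mm,T)|$, computes $|\he(k_\lambda,T)|=[\lambda]_q!\,\langle F_T(x),h_\lambda\rangle$ via the same fiber-counting over $T$-invariant $\lambda$-flags that you describe, and concludes exactly as in your final chain of equalities; your preliminary observation that the right-hand side also satisfies the modular law is true but ultimately not needed in the execution.
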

\begin{proof}
  By Proposition \ref{prop:base} and Lemma \ref{lem:modular}, it follows that
  \begin{align*}
    f(\mm,T)=\sum_{\lambda \vdash n}\frac{a_{q,\lambda}(\mm)}{[\lambda]_q!}f(k_\lambda,T).
  \end{align*}
   For each partition $\lambda=(\lambda_1,\ldots,\lambda_\ell)$,  the quantity $f(k_\lambda,T)$ is equal, by definition, to the number of complete flags $(0)=V_{0}\subseteq V_1\subseteq \cdots \subseteq V_n$ of subspaces of $\Fq^n$ such that $TV_i\subset V_{k_\lambda(i)}$ for each $1\leq i\leq n$ where the tuple $(k_\lambda(1),k_\lambda(2),\ldots,k_\lambda(n))$ is given by
  $$
(\underbrace{\lambda_1,\ldots,\lambda_1}_{\lambda_1 \text{ copies}},\underbrace{\lambda_1+\lambda_2,\ldots,\lambda_1+\lambda_2}_{\lambda_2 \text{ copies}}, \ldots,\underbrace{n,\ldots,n}_{\lambda_\ell \text{ copies}}).
  $$
  If we write $\alpha_i=\lambda_1+\cdots+\lambda_i$, then the containment conditions above are clearly equivalent to the subset of conditions $TV_{\alpha_i}\subseteq V_{\alpha_i}$ for each $1\leq i\leq \ell$. Thus $(0)\subseteq V_{\alpha_1}\subseteq V_{\alpha_2}\subseteq \cdots \subseteq V_{\alpha_\ell}$ is a $\lambda$-flag of $T$ invariant subspaces; it follows from the definition of $F_T(x)$ that the number of choices for such a flag is equal to the coefficient of $m_\lambda$ in $F_T(x)$ which is simply $\langle  F_T(x),h_\lambda\rangle$.

  Once the subspaces $V_{\alpha_i}$ in the complete flag have been chosen, the remaining subspaces $V_j(j\neq \alpha_i)$ can be chosen in $[\lambda]_q!:=[\alpha_1]_q![\alpha_2-\alpha_1]_q![\alpha_3-\alpha_2]_q!\cdots$ ways. It follows that
  \begin{align*}
    f(k_\lambda,T)=[\lambda]_q! \langle  F_T(x),h_\lambda \rangle.
  \end{align*}
  Consequently,
  \begin{align*}
    f(\mm,T)&=\sum_{\lambda \vdash n}a_{q,\lambda}(\mm)\langle  F_T(x),h_\lambda \rangle=\langle F_T(x), \omega X_{G(\mm)}(x;q)\rangle,
  \end{align*}
  proving the result.
\end{proof}

We now prove Corollary \ref{cor:simple_hessenberg} stated in the introduction.

\begin{proof}[Proof of Corollary \ref{cor:simple_hessenberg}]
We begin by recalling the expansion of $X_{G(\mm)}(x;t)$ in the elementary basis when $\mm=(\mm(1), n,\ldots, n)$ (see, for example, \cite[Theorem 4.2]{CH18}):
\[
X_{G(\mm)} =[n-2]!_q \Big(q^{\mm(1)-1}[n-\mm(1)]_qe_{n-1,1} + [n]_q[\mm(1)-1]_qe_n\Big).
\]
On the other hand, by the definition of $F_T(x)$, we have
\[
F_T(x) = m_{n}+ s\; m_{n-1,1} + \cdots,
\]
where $s$ is the number of $1$-dimensional $T$-invariant subspaces. If $T$ has similarity class type $\{(d_1,\lambda^1),\ldots,(d_r,\lambda^r)\},$ then it is easily seen that
\[
s = \sum_{\substack{1\leq i\leq r\\ d_i=1}}[\ell(\lambda^i)]_q,
\]
where $\ell(\lambda)$ denotes the number of parts of $\lambda$. Since $m_\lambda$ and $h_\lambda$ are dual bases with respect to the Hall scalar product, and $\omega e_\lambda=h_\lambda$, it follows that
\[
\he(\mm,T)| = [n-2]!_q\Big([n]_q[\mm(1)-1]_q + q^{\mm(1)-1}[n-\mm(1)]_q\; s\Big).
\]
This completes the proof.    
\end{proof}

\begin{corollary}\label{cor:tripoints}
  If $T$ is triangulable over $\Fq$ of Jordan type $\Lambda=\{\lambda^i\}_{1\leq i\leq r}$, then
  \begin{align*}
   |\he(\mm,T)|=\langle \prod_{i=1}^r \tilde{H}_{\lambda^i}(x;q),\omega X_{G(\mm)}(x;q)   \rangle.
  \end{align*}
\end{corollary}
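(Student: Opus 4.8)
The plan is to obtain Corollary~\ref{cor:tripoints} as an immediate specialization of the main theorem (Theorem~\ref{thm:main}) combined with the explicit description of $F_T(x)$ for triangulable operators. Since Theorem~\ref{thm:main} gives $|\he(\mm,T)| = \langle F_T(x), \omega X_{G(\mm)}(x;q)\rangle$ for \emph{any} linear operator $T$ on $\Fq^n$, the only thing that remains is to substitute the correct value of $F_T(x)$ when $T$ is triangulable.

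\begin{proof}[Proof of Corollary \ref{cor:tripoints}]
By hypothesis $T$ is triangulable over $\Fq$ of Jordan type $\Lambda=\{\lambda^i\}_{1\leq i\leq r}$, so by the first Example of Section~\ref{sec:ifgf} (equivalently, by Proposition~\ref{prop:ifgf} applied to the similarity class type $\tau=\{(1,\lambda^1),\ldots,(1,\lambda^r)\}$, in which every $d_i=1$ and hence every plethystic factor $p_{d_i}[\tilde H_{\lambda^i}(x;q)]$ reduces to $\tilde H_{\lambda^i}(x;q)$), we have
\[
F_T(x)=\prod_{i=1}^r \tilde{H}_{\lambda^i}(x;q).
\]
Substituting this into the formula of Theorem~\ref{thm:main} yields
\[
|\he(\mm,T)|=\Big\langle \prod_{i=1}^r \tilde{H}_{\lambda^i}(x;q),\ \omega X_{G(\mm)}(x;q)\Big\rangle,
\]
as claimed.
\end{proof}

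There is essentially no obstacle here: the corollary is a bookkeeping consequence of two already-established facts, namely the universal counting formula of Theorem~\ref{thm:main} and the identification of $F_T(x)$ with $\prod_i \tilde H_{\lambda^i}(x;q)$ for triangulable $T$. The only point worth a moment's care is making sure the reduction $p_{d_i}[\tilde H_{\lambda^i}(x;q)]=\tilde H_{\lambda^i}(x;q)$ when $d_i=1$ is invoked correctly, which is immediate since $p_1[g]=g$ for any symmetric function $g$; alternatively one simply cites the triangulable Example directly, which states the value of $F_T(x)$ in this case without any plethysm. If one wished to spell things out a little more, one could also note that the characterization of triangulability (characteristic polynomial splitting completely over $\Fq$) forces every irreducible factor $f_i$ to be linear, so the $\Fq[t]$-module decomposition has each $f_i$ of degree one, which is exactly the condition $d_i=1$ needed for the plethystic factors to collapse.
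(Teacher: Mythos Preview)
Your proof is correct and follows exactly the approach the paper intends: the corollary is stated without proof in the paper because it is an immediate specialization of Theorem~\ref{thm:main} together with the triangulable Example (or equivalently Proposition~\ref{prop:ifgf} with all $d_i=1$), which is precisely what you wrote.
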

Let $\mu=(\mu_1,\ldots,\mu_\ell)$ be a partition and consider the Hessenberg function $\mm=k_\mu$. In this case, $G(\mm)$ is a disjoint union of $\ell$ cliques (complete graphs) whose sizes are given by the $\mu_i(1\leq i\leq \ell)$. Therefore $X_{G(\mm)}(x;t)=( \prod_{i=1}^\ell[\mu_i]_t! ) e_\mu$. For this Hessenberg function $\mm$ and with $r=1$ in Corollary \ref{cor:tripoints} we essentially obtain the result stated in Mellit \cite[Cor. 2.13]{MR4125451} and Ram \cite[Thm.~3.2]{ram2024lusztigvarietiesmacdonaldpolynomials}.

\begin{corollary}\label{cor:regularsemisimplepoints}
  If $T$ is regular semisimple, then
\begin{align*}
   |\he(\mm,T)|=\langle  p_\lambda,\omega X_{G(\mm)}(x;q)   \rangle,
  \end{align*}
  where $\lambda$ denotes the partition corresponding to the multiset of degrees of the distinct irreducible polynomials dividing the characteristic polynomial of $T.$
\end{corollary}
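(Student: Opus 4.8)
The plan is to derive Corollary \ref{cor:regularsemisimplepoints} as an immediate specialization of Theorem \ref{thm:main}. The key observation, which is essentially free once we have the machinery set up, is that Example \ref{eg:regularsemisimple} already identifies the invariant flag generating function of a regular semisimple operator: if $T$ is regular semisimple with characteristic polynomial factoring into distinct irreducibles $f_1,\ldots,f_r$ of degrees $\alpha_1,\ldots,\alpha_r$, then $F_T(x)=p_{(\alpha_1,\ldots,\alpha_r)}=p_\lambda$, where $\lambda$ is the partition obtained by sorting the $\alpha_i$. Substituting this into the formula of Theorem \ref{thm:main} gives $|\he(\mm,T)|=\langle p_\lambda,\omega X_{G(\mm)}(x;q)\rangle$, which is the claim.

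In more detail, I would proceed as follows. First, recall from Example \ref{eg:regularsemisimple} that a regular semisimple $T$ corresponds to an $\Fq[t]$-module $\bigoplus_{i=1}^r \Fq[t]/(f_i)$ with the $f_i$ distinct irreducible, and that this places $T$ in similarity class type $\tau=\{(\alpha_1,(1)),\ldots,(\alpha_r,(1))\}$ with $\alpha_i=\deg f_i$. Next, apply Proposition \ref{prop:ifgf}: $F_T(x)=F_\tau(x;q)=\prod_{i=1}^r p_{\alpha_i}[\tilde H_{(1)}(x;q)]$. Since $\tilde H_{(1)}(x;t)=h_1=p_1=\sum_j x_j$, the plethysm $p_{\alpha_i}[p_1]$ is just $p_{\alpha_i}$, so the product collapses to $\prod_{i=1}^r p_{\alpha_i}=p_\lambda$. (Alternatively, one can cite Example \ref{eg:regularsemisimple} directly, where this identification is already recorded.) Finally, feed $F_T(x)=p_\lambda$ into Theorem \ref{thm:main} to obtain the stated formula.

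Since essentially all the work has been done in the preceding sections, there is no real obstacle here; the only point requiring a sentence of care is to confirm that the multiset of degrees sorted into weakly decreasing order is indeed a partition of $n$ (which holds because $\sum_i \alpha_i=\deg(\text{char.\ poly})=n$), so that $p_\lambda$ is a well-defined symmetric function of the right degree and the scalar product on the right-hand side makes sense. One could also remark, for the reader's benefit, that this recovers and generalizes the regular semisimple case that appears in the Shareshian--Wachs framework, where $\omega X_{G(\mm)}$ paired against power sums recovers the connection to the chromatic quasisymmetric function, but this is not needed for the proof.

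\begin{proof}[Proof of Corollary \ref{cor:regularsemisimplepoints}]
  By Example \ref{eg:regularsemisimple}, a regular semisimple operator $T$ over $\Fq$ has invariant flag generating function $F_T(x)=p_\lambda$, where $\lambda$ is the partition whose parts are the degrees of the distinct irreducible factors of the characteristic polynomial of $T$; note that $\lambda\vdash n$ since these degrees sum to $\deg(\text{char.\ poly of }T)=n$. Substituting this expression for $F_T(x)$ into the formula of Theorem \ref{thm:main} yields
  \begin{align*}
    |\he(\mm,T)|=\langle p_\lambda,\omega X_{G(\mm)}(x;q)\rangle,
  \end{align*}
  as claimed.
\end{proof}
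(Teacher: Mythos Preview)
Your proof is correct and is exactly the intended argument: the paper states this corollary without an explicit proof because it follows immediately from Theorem~\ref{thm:main} together with Example~\ref{eg:regularsemisimple}, precisely as you have written.
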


Athanasiadis \cite[Thm. 3.1]{MR3359910} proved a combinatorial formula, originally conjectured by Shareshian and Wachs \cite[Conj. 7.6]{MR3488041}, for the coefficients in the power sum expansion of $\omega X_{G(\mm)}(x;t)$. This formula is expressed as a sum of a suitably defined statistic over a class of permutations in the symmetric group $\mathfrak{S}_n$. In view of Corollary \ref{cor:regularsemisimplepoints} this formula corresponds to the number of points on regular semisimple varieties.

 In the remainder of this section $F$ denotes the field $\CC$ of complex numbers or a finite field $\Fq$. Given a Hessenberg function $\mm:[n]\to [n]$ and a triangulable linear operator $X$ on $F^n$ let $\he(\mm,X)$ denote the corresponding Hessenberg variety. Note that every linear operator on $\CC^n$ is triangulable. Tymoczko \cite{MR2275912} proved that the variety $\he(\mm,X)$ admits an affine paving (cellular decomposition). Although this result is proved in the setting of complex Hessenberg varieties, it is easily verified that it carries over to the case of finite fields provided the operator $X$ is triangulable. The existence of an affine paving implies that the cohomology of a complex Hessenberg variety is concentrated in even dimensions. Thus its (modified) Poincaré polynomial is given by
\begin{align*}
  {\rm Poin}(\he(\mm,X);t)=\sum_{d=0}^\delta\beta_{2d} t^d,
\end{align*}
where $\beta_{2d}:=\dim H^{2d}(\he(\mm,X);\QQ)$ denotes the $2d$-th Betti number of the variety while $\delta$ denotes its complex dimension. The Betti number $\beta_{2d}$ can also be interpreted combinatorially as the number of $d$-dimensional cells in an affine paving of $\he(\mm,X)$. For a triangulable operator $X$ on $F^n$, the dimensions and multiplicities of cells in an affine paving depend solely on the Hessenberg function $\mm$ and the Jordan type of $X$ (see Tymoczko \cite[Thm. 7.1]{MR2275912}). Consequently, we have the following result.
\begin{theorem}\label{thm:sameforcandfq}
If $T$ is a triangulable operator on $\Fq^n$, then the number of $\Fq$-rational points on $\he(\mm,T)$ is given by $  {\rm Poin}(\he(\mm,X);q)$ where $X$ is any linear operator on $\CC^n$ with the same Jordan type as $T.$
\end{theorem}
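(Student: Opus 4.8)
The plan is to reduce everything to two facts already in hand: Tymoczko's affine-paving theorem, which asserts that for a triangulable operator $X$ the variety $\he(\mm,X)$ is paved by affine spaces with the dimensions and multiplicities of the cells depending only on $\mm$ and the Jordan type of $X$ (cited above as \cite[Thm.~7.1]{MR2275912}), and the elementary observation that this paving construction is insensitive to the ground field as long as $X$ is triangulable over it. First I would fix a triangulable operator $T$ on $\Fq^n$ and let $X$ be any operator on $\CC^n$ with the same Jordan type $\Lambda$. Because the paving is given by explicit affine cells indexed by a combinatorial set $\mathcal{C}(\mm,\Lambda)$ that depends only on $\mm$ and $\Lambda$ — not on the field — the same index set $\mathcal{C}(\mm,\Lambda)$ parametrizes an affine paving of $\he(\mm,T)$ over $\Fq$, with a cell of complex dimension $d$ corresponding to a cell isomorphic to $\mathbb{A}^d_{\Fq}$.

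Next I would count points: a cell isomorphic to $\mathbb{A}^d_{\Fq}$ contributes exactly $q^d$ points, and since the cells partition $\he(\mm,T)(\Fq)$ we get
\begin{align*}
  |\he(\mm,T)| = \sum_{c\in \mathcal{C}(\mm,\Lambda)} q^{\dim c} = \sum_{d=0}^{\delta} |\{c\in\mathcal{C}(\mm,\Lambda):\dim c = d\}|\, q^d.
\end{align*}
On the complex side, the affine paving of $\he(\mm,X)$ forces its cohomology to be concentrated in even degrees with $\beta_{2d}$ equal to the number of $d$-dimensional cells, i.e. $\beta_{2d} = |\{c\in\mathcal{C}(\mm,\Lambda):\dim c=d\}|$, so that
\begin{align*}
  {\rm Poin}(\he(\mm,X);t) = \sum_{d=0}^{\delta}\beta_{2d}\,t^d = \sum_{d=0}^{\delta} |\{c\in\mathcal{C}(\mm,\Lambda):\dim c=d\}|\, t^d.
\end{align*}
Comparing the two displays and substituting $t=q$ yields $|\he(\mm,T)| = {\rm Poin}(\he(\mm,X);q)$, which is the claim. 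One should also note that $\delta$ is the same in both settings, being the number of north-east corner conditions imposed by $\mm$, so the two sums have matching ranges.

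The only genuinely substantive point is the assertion that Tymoczko's construction transfers verbatim to $\Fq$ when $X$ is triangulable; everything else is bookkeeping. Here I would argue that Tymoczko builds the cells by choosing, in a fixed full flag adapted to a Jordan basis of $X$, the coordinates of $V_\bullet$ relative to a reference flag, and that the resulting cell is cut out inside a Schubert cell by \emph{linear} equations coming from the Hessenberg conditions $XV_i\subseteq V_{\mm(i)}$; the solution set of a consistent linear system over any field of the appropriate rank is an affine space of the predicted dimension. Since a triangulable $X$ admits a Jordan basis over $\Fq$ exactly as over $\CC$, the combinatorial indexing set and all the cell dimensions are literally the same — this is precisely the content of the parenthetical remark preceding the theorem statement. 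I expect the referee-facing obstacle, if any, to be making this transfer statement fully rigorous rather than merely "easily verified"; in the present exposition it is taken as given by the discussion immediately above the theorem, so the proof can legitimately be short.

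\begin{proof}
Let $T$ be triangulable on $\Fq^n$ with Jordan type $\Lambda$, and let $X$ be any linear operator on $\CC^n$ with the same Jordan type. By Tymoczko \cite[Thm.~7.1]{MR2275912} the variety $\he(\mm,X)$ admits an affine paving whose cells are indexed by a combinatorial set depending only on $\mm$ and $\Lambda$, and in which the dimension and multiplicity of the cells likewise depend only on $\mm$ and $\Lambda$. As noted in the discussion preceding the statement, the same construction applies over $\Fq$ because a triangulable operator admits a Jordan basis over $\Fq$, and the Hessenberg conditions cut out each cell as the solution set of a linear system of the same rank; hence $\he(\mm,T)$ is paved by affine cells indexed by the same set, a $d$-dimensional complex cell corresponding to a cell isomorphic to $\mathbb{A}^d_{\Fq}$. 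Writing $c_d$ for the number of $d$-dimensional cells, we obtain
\begin{align*}
  |\he(\mm,T)| = \sum_{d=0}^{\delta} c_d\, q^d.
\end{align*}
On the other hand, the affine paving of $\he(\mm,X)$ implies that its rational cohomology is concentrated in even degrees with $\beta_{2d}=c_d$, so
\begin{align*}
  {\rm Poin}(\he(\mm,X);t) = \sum_{d=0}^{\delta}\beta_{2d}\, t^d = \sum_{d=0}^{\delta} c_d\, t^d.
\end{align*}
Setting $t=q$ and comparing with the previous display gives $|\he(\mm,T)| = {\rm Poin}(\he(\mm,X);q)$.
\end{proof}
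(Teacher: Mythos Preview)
Your proposal is correct and follows exactly the approach the paper takes: the paper does not give a formal proof of this theorem at all, but simply states it as a consequence of the preceding discussion (Tymoczko's affine paving, its transfer to $\Fq$ for triangulable operators, and the dependence of cell dimensions and multiplicities only on $\mm$ and the Jordan type). You have faithfully expanded that discussion into an explicit argument, including the point-count-equals-Poincar\'e-polynomial step that the paper leaves implicit.
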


\begin{corollary}\label{cor:poinc}
For each linear operator $X$ on $\CC^n$ of Jordan type $\Lambda=\{\lambda^i\}_{1\leq i\leq r}$, the Poincaré polynomial of the variety $\he(\mm,X)$ is given by
  \begin{align*}
    {\rm Poin}(\he(\mm,X);t)=\langle \prod_{i=1}^r \tilde{H}_{\lambda^i}(x;t),\omega X_{G(\mm)}(x;t)   \rangle.
  \end{align*}
\end{corollary}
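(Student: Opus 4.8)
The plan is to deduce Corollary \ref{cor:poinc} by combining Theorem \ref{thm:sameforcandfq} with Corollary \ref{cor:tripoints}. First I would fix a linear operator $X$ on $\CC^n$ of Jordan type $\Lambda=\{\lambda^i\}_{1\leq i\leq r}$ and choose a prime power $q$. Over $\Fq$ one can construct a triangulable operator $T$ of the same Jordan type $\Lambda$ (take a direct sum of Jordan blocks with eigenvalues in $\Fq$, enlarging $q$ if necessary so that $r$ distinct eigenvalues are available). By Theorem \ref{thm:sameforcandfq}, the number of $\Fq$-rational points of $\he(\mm,T)$ equals ${\rm Poin}(\he(\mm,X);q)$. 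On the other hand, Corollary \ref{cor:tripoints} (the triangulable case of the main theorem) gives
\[
|\he(\mm,T)|=\Big\langle \prod_{i=1}^r \tilde{H}_{\lambda^i}(x;q),\ \omega X_{G(\mm)}(x;q)\Big\rangle.
\]
Equating the two expressions yields the identity in Corollary \ref{cor:poinc} with $t$ specialized to $q$.

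The second step is to upgrade this from an identity at infinitely many values $t=q$ to an identity of polynomials (or rational functions) in the formal variable $t$. Both sides are manifestly polynomials in $t$: the left side ${\rm Poin}(\he(\mm,X);t)$ is a polynomial by construction (its coefficients are the Betti numbers $\beta_{2d}$), and the right side is a Hall scalar product of two symmetric functions whose coefficients, when expanded in dual bases, are polynomials in $t$ — indeed $\omega X_{G(\mm)}(x;t)=\sum_\lambda a_{t,\lambda}(\mm)h_\lambda$ with $a_{t,\lambda}(\mm)\in\ZZ[t]$, and $\prod_i\tilde H_{\lambda^i}(x;t)$ has monomial-expansion coefficients in $\ZZ[t]$ as well (these are specializations of modified Kostka–Foulkes polynomials and plethysms thereof). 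Since two polynomials in one variable that agree at infinitely many integer points (all prime powers) are equal, the identity holds in $\ZZ[t]$, hence as symmetric functions over $\QQ(t)$.

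I expect the only genuine subtlety to be the first step: ensuring that for infinitely many $q$ there really is a triangulable $\Fq$-operator of the prescribed Jordan type $\Lambda$. This is routine — one needs $q$ large enough to supply $r$ distinct eigenvalues $a_1,\dots,a_r\in\Fq$, and every sufficiently large prime power works — but it should be stated explicitly so that the "infinitely many $q$" hypothesis of the polynomial-interpolation argument is satisfied. A minor bookkeeping point is that Corollary \ref{cor:tripoints} is phrased with the Jordan type as a multiset $\{\lambda^i\}$, matching the hypothesis of Corollary \ref{cor:poinc} verbatim, so no translation of indexing conventions is needed. Everything else is a direct chain of equalities, so the write-up will be short.
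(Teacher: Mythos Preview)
Your argument is correct and matches the paper's proof exactly: the paper also derives the corollary by combining Corollary~\ref{cor:tripoints} with Theorem~\ref{thm:sameforcandfq} and then invoking the fact that a polynomial is determined by its values on the infinite set of prime powers. Your added remark that one must take $q$ large enough to accommodate $r$ distinct eigenvalues is a welcome clarification that the paper leaves implicit.
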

\begin{proof}
  Follows from Corollary \ref{cor:tripoints} and Theorem \ref{thm:sameforcandfq} since a polynomial is uniquely determined by the values it takes on any infinite set such as prime powers. 
\end{proof}

 An elegant combinatorial formula for the Poincaré polynomial ${\rm Poin}(\he(\mm,X);t)$ appears in the work of Tymoczko \cite[Thm. 7.1]{MR2275912}. Corollary \ref{cor:poinc} provides a convenient way to compute these polynomials using a computer algebra system.
\begin{problem}
Shareshian and Wachs \cite[Thm. 6.3]{MR3488041} gave a combinatorial formula for the Schur coefficients of $\omega X_{\sf m}(x;t)$. Together with the expansion $\tilde{H}_\lambda(x;t)=\sum_\mu \tilde{K}_{\mu\lambda}(t)s_\mu$, can one obtain a formula for the dimension of the nilpotent Hessenberg variety? (This was posed as an open problem by Tymoczko).
\end{problem}

Specializing Corollary \ref{cor:poinc} to regular operators, we obtain the following result of Brosnan and Chow \cite[Thm. 35]{MR3783432}.
\begin{corollary}\label{cor:poinreg}
  If $X$ is a regular operator on $\CC^n$, then
  \begin{align*}
        {\rm Poin}(\he(\mm,X);t)=\langle h_\mu,\omega X_{G(\mm)}(x;t)   \rangle,
  \end{align*}
  where $\mu$ is the partition whose parts correspond to the sizes of the Jordan blocks in the Jordan canonical form of $X.$
\end{corollary}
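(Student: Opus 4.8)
The plan is to deduce this immediately from Corollary \ref{cor:poinc} together with the base case $\tilde{H}_{(n)}(x;t)=h_n$ recorded in Section \ref{sec:ifgf}. First I would recall what it means for a linear operator $X$ on $\CC^n$ to be \emph{regular}: its characteristic polynomial coincides with its minimal polynomial, equivalently each generalized eigenspace contributes a single Jordan block. Thus if $\mu=(\mu_1,\mu_2,\ldots)$ is the partition whose parts are the sizes of the Jordan blocks of $X$, then the Jordan type of $X$ in the sense used in Corollary \ref{cor:poinc} is $\Lambda=\{(\mu_1),(\mu_2),\ldots\}$, where each constituent partition $\lambda^i=(\mu_i)$ has a single part.

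Next I would apply Corollary \ref{cor:poinc} to this $X$, obtaining
\begin{align*}
  {\rm Poin}(\he(\mm,X);t)=\Big\langle \prod_i \tilde{H}_{(\mu_i)}(x;t),\,\omega X_{G(\mm)}(x;t)\Big\rangle.
\end{align*}
Then I would invoke the identity $\tilde{H}_{(m)}(x;t)=h_m$ (noted after the definition of the modified Hall-Littlewood polynomials, and also following from $\tilde K_{\mu,(m)}(t)$ being the cocharge generating polynomial over $\mathrm{SSYT}(\mu,(m))$, which is nonzero only for $\mu=(m)$ with value $1$). Hence $\prod_i \tilde{H}_{(\mu_i)}(x;t)=\prod_i h_{\mu_i}=h_\mu$, and substituting this into the scalar product yields exactly
\begin{align*}
  {\rm Poin}(\he(\mm,X);t)=\langle h_\mu,\,\omega X_{G(\mm)}(x;t)\rangle,
\end{align*}
which is the claim.

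There is no substantial obstacle here; the corollary is a direct specialization, and the only points requiring (minor) care are the two ingredients just named: that ``regular'' forces the Jordan type to consist of one-row partitions, and that the single-row modified Hall–Littlewood polynomial degenerates to $h_m$. Both are standard, so the proof is essentially a one-line reduction to Corollary \ref{cor:poinc}. (One could alternatively bypass Corollary \ref{cor:poinc} and argue directly from Corollary \ref{cor:tripoints} and Theorem \ref{thm:sameforcandfq}, using Example \ref{eg:regularsplit} to identify $F_T(x)=h_\mu$ when $T$ is regular split over $\Fq$, but going through Corollary \ref{cor:poinc} is cleaner since it already packages the passage between $\Fq$ and $\CC$.)
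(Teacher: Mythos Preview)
Your proposal is correct and matches the paper's own approach: the paper simply states that Corollary \ref{cor:poinreg} follows by specializing Corollary \ref{cor:poinc} to regular operators, which is exactly the reduction you carry out (using $\tilde{H}_{(m)}(x;t)=h_m$ and the fact that regularity forces each $\lambda^i$ to be a single row).
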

An interesting consequence of Corollary \ref{cor:poinreg} is that the Betti numbers of complex regular Hessenberg varieties form a palindromic sequence even though these varieties are not smooth (Brosnan and Chow~\cite[Cor. 36]{MR3783432}). 

\begin{problem}
Find a combinatorial formula for the number of points on an arbitrary Hessenberg variety over $\Fq$.
\end{problem}

\section{Appendix. Combinatorial formula for modified Hall-Littlewood polynomials} 
\label{sec:combform}
\addcontentsline{toc}{section}{Appendix. Combinatorial formula for modified Hall-Littlewood polynomials}
Consider the monomial expansion of the modified Hall-Littlewood polynomial,
\begin{align*}
  \tilde{H}_\lambda(x;t)=\sum_{\lambda}a_{\lambda\mu}(t)m_\mu.
\end{align*}
Since $\tilde{H}_\lambda(x;t)$ is Schur positive, $a_{\lambda\mu}(t)$ is a polynomial with nonnegative integer coefficients. In view of the specialization $\tilde{H}_\lambda(x;1)=h_\lambda$, it follows that $a_{\lambda\mu}(1)$ is equal to the number of nonnegative integer matrices whose row and column sums are given by $\lambda$ and $\mu$ respectively (Stanley \cite[Prop. 7.5.1]{MR1676282}). Butler gave a statistic (see Definition \ref{def:value}) on tabloids (which are in bijection with nonnegative integer matrices) of shape $\lambda$ and content $\mu$ that generates the polynomial $a_{\lambda\mu}(t)$. For partitions $\lambda$ and $\mu$, a tabloid of shape $\lambda$ and content $\mu$ is a filling of the Young diagram ${\rm dg}(\lambda)$ of $\lambda$ (in English notation) which contains $\mu_i$ entries equal to $i$ such that the rows are weakly increasing from left to right (with no condition on the columns; see Figure \ref{fig:tabloid}). In this section we use the expression obtained earlier  for the Poincaré polynomial to give a new derivation of Butler's statistic.

\begin{definition}
Suppose $\lambda$ is a partition of $n$ and let $\mm:[n]\to [n]$ be a Hessenberg function. An $\mm$-\emph{compatible} filling is a filling of ${\rm dg}(\lambda)$ containing each element of $[n]$ such that the configuration \begin{tabular}{|c|c|} \hline $k$ & $j$ \\ \hline \end{tabular} occurs only if $k\leq \mm(j)$.  
\end{definition}
 For an $\mm$-compatible filling $\varphi$, let $v(\varphi)$ denote the number of pairs of entries $i>k$ such that
  \begin{enumerate}
  \item   the entry $i$ appears either below $k$ in the same column or in a column to the left of $k$ (see Figure \ref{fig:ipos}),
  \item if there is a box immediately to the right of $k$ that is filled by $j$, then $i\leq \mm(j)$.
  \end{enumerate}

The following result is a consequence of Corollary \ref{cor:tripoints} and the combinatorial formula of Tymoczko \cite[Thm. 1.1]{MR2275912} for the Poincaré polynomial of a nilpotent Hessenberg variety.

  \begin{theorem}\label{thm:nilpotentstat}
For each partition $\lambda$ of $n$ and Hessenberg function  $\mm:[n]\to [n]$, the Poincaré polynomial of a nilpotent operator $X$ on  $\CC^n$ with Jordan form partition $\lambda$ is given by
  \begin{align*}
    \langle  \tilde{H}_\lambda(x;t),\omega X_{G(\mm)}(x;t) \rangle=\sum_{\varphi}t^{v(\varphi)},
  \end{align*}
where the sum is taken over all $\mm$-compatible fillings $\varphi$ of ${\rm dg}(\lambda)$ containing all positive integers in $[n]$.
\end{theorem}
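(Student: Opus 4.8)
The plan is to interpret both sides of the identity as counting the same set of combinatorial objects, using Tymoczko's affine paving of the nilpotent Hessenberg variety as the bridge. By Corollary~\ref{cor:tripoints} (with $r=1$ and $\Lambda=\{\lambda\}$), the left-hand side equals $|\he(\mm,T)|$ for a nilpotent operator $T$ on $\Fq^n$ of Jordan type $\lambda$, and by Theorem~\ref{thm:sameforcandfq} this equals $\mathrm{Poin}(\he(\mm,X);q)$ for the corresponding complex nilpotent operator $X$. So it suffices to show that Tymoczko's combinatorial formula \cite[Thm.~1.1]{MR2275912} for the Poincaré polynomial of the nilpotent Hessenberg variety, which is stated as a sum $\sum t^{(\text{dimension of cell})}$ over a set of combinatorial data indexing the cells of the affine paving, can be rewritten as $\sum_{\varphi} t^{v(\varphi)}$, where $\varphi$ ranges over $\mm$-compatible fillings of $\mathrm{dg}(\lambda)$ and $v$ is the statistic defined above.

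The first step is to recall precisely the form of Tymoczko's paving: the cells of $\he(\mm,X)$ for $X$ nilpotent of type $\lambda$ are indexed by certain fillings of the Young diagram of $\lambda$ (arising from choosing, in each step of building a flag adapted to a fixed Jordan basis, which Jordan string the new basis vector "comes from"), and the dimension of the cell attached to a filling is given by a count of pairs of boxes satisfying positional and Hessenberg constraints. I would then set up an explicit bijection between Tymoczko's indexing fillings and the $\mm$-compatible fillings in the statement — most likely these are literally the same objects once one matches conventions (English notation, rows weakly increasing versus the order in which entries are placed), or differ by an obvious transpose/reflection. The second step is to check that under this bijection Tymoczko's dimension statistic matches $v(\varphi)$ exactly: the condition in item~(1) of the definition of $v$ (entry $i$ below $k$ in the same column, or in a column strictly to the left) should correspond to Tymoczko's "inversion-type" condition on pairs of boxes, and item~(2) ($i \le \mm(j)$ where $j$ fills the box immediately right of $k$) should be exactly the Hessenberg condition appearing in her cell-dimension count.

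The main obstacle I anticipate is purely bookkeeping but genuinely delicate: reconciling the conventions. Tymoczko's formula is phrased in terms of a chosen reduced word / a permutation-flag pair, and the translation into "fillings of $\mathrm{dg}(\lambda)$" requires care about which coordinate is "row" and which is "column", whether diagrams are drawn in English or French notation, and whether the relevant partial order on boxes is "weakly/strictly to the left" or "weakly/strictly below" — a single off-by-one or transpose error would break the match. I would handle this by verifying the identity in small cases ($n=2,3$ and $\lambda=(2,1)$, $\mm=(2,3,3)$, say) to pin down the correct dictionary before writing the general argument, and by cross-checking the $t=1$ specialization: setting $t=1$, the right-hand side must count all $\mm$-compatible fillings, the left-hand side becomes $\langle h_\lambda,\omega X_{G(\mm)}(x;1)\rangle = \langle h_\lambda, \omega X_{G(\mm)}(x;1)\rangle$, which by $X_{G(\mm)}(x;1) = \prod_i [\mm(i)-i+1]\,(\text{something})$-type identities reduces to a product formula that can be matched directly against the count of $\mm$-compatible fillings. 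Once the dictionary is fixed, the remaining verification that $v(\varphi)$ equals Tymoczko's cell dimension is a direct term-by-term comparison.
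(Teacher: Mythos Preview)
Your proposal is correct and follows exactly the paper's approach: the paper simply states that Theorem~\ref{thm:nilpotentstat} is a consequence of Corollary~\ref{cor:tripoints} together with Tymoczko's combinatorial formula \cite[Thm.~1.1]{MR2275912} for the Poincar\'e polynomial of a nilpotent Hessenberg variety, without spelling out the dictionary. Your more detailed plan (invoking Theorem~\ref{thm:sameforcandfq} and then matching Tymoczko's cell-indexing fillings and dimension statistic against the $\mm$-compatible fillings and $v(\varphi)$) is precisely the content that the paper leaves implicit.
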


\begin{definition}
For a tabloid $\theta$, let $w(\theta)$ denote the number of pairs of entries $i>k$ such that
\begin{enumerate}
\item the entry $i$ appears either below $k$ in the same column or in a column to the left of $k$,
\item if there is a box immediately to the right of $k$ that is filled by $j$, then $i\leq j$.
\end{enumerate}
\end{definition}

\begin{theorem} We have
  $$a_{\lambda\mu}(t)=\sum_{\theta} t^{w(\theta)},$$
where the sum is over all tabloids $\theta$ of shape $\lambda$ and content $\mu$. 
\end{theorem}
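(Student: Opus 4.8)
The plan is to specialize Theorem~\ref{thm:nilpotentstat} to the Hessenberg function $\mm=k_\mu$ and then to collapse the $k_\mu$-compatible fillings onto tabloids by a block reduction. For the left-hand side, recall from the excerpt that $G(k_\mu)$ is a disjoint union of cliques of sizes $\mu_1,\mu_2,\dots$, so $X_{G(k_\mu)}(x;t)=[\mu]_t!\,e_\mu$ and hence $\omega X_{G(k_\mu)}(x;t)=[\mu]_t!\,h_\mu$, where $[\mu]_t!=\prod_j[\mu_j]_t!$. Since $\{m_\nu\}$ and $\{h_\nu\}$ are dual bases for the Hall scalar product, $\langle\tilde H_\lambda(x;t),h_\mu\rangle$ is the coefficient of $m_\mu$ in $\tilde H_\lambda(x;t)$, namely $a_{\lambda\mu}(t)$. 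Thus Theorem~\ref{thm:nilpotentstat} reads
\[
[\mu]_t!\,a_{\lambda\mu}(t)=\sum_{\varphi}t^{v(\varphi)},
\]
the sum being over the $k_\mu$-compatible fillings $\varphi$ of ${\rm dg}(\lambda)$ whose entries are exactly $1,\dots,n$.

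Next I would set up the block reduction. Put $\alpha_j=\mu_1+\dots+\mu_j$ and call $\{\alpha_{b-1}+1,\dots,\alpha_b\}$ the $b$-th \emph{block}; write $b(i)$ for the block index of $i\in[n]$, so $k_\mu(i)=\alpha_{b(i)}$. For a $k_\mu$-compatible filling $\varphi$, let $\bar\varphi$ be obtained by replacing each entry $i$ with $b(i)$. The claim is that $\varphi\mapsto\bar\varphi$ is a surjection from $k_\mu$-compatible fillings of ${\rm dg}(\lambda)$ onto tabloids of shape $\lambda$ and content $\mu$, all of whose fibres have size $\prod_b\mu_b!$: weak increase along the rows of $\bar\varphi$ is exactly the $k_\mu$-compatibility of $\varphi$ on horizontally adjacent cells, while conversely, because $k_\mu$ is constant ($=\alpha_b$) on the $b$-th block, \emph{any} assignment of the integers of the $b$-th block to the cells carrying the value $b$ in a fixed tabloid produces a $k_\mu$-compatible lift, the choices being independent across the blocks.

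The heart of the argument is the identity, for $\varphi$ with $\bar\varphi=\theta$,
\[
v(\varphi)=w(\theta)+\sum_b\mathrm{inv}_b(\varphi),
\]
where $\mathrm{inv}_b(\varphi)$ is the number of inversions of the word obtained by reading off the $b$-th block entries of $\varphi$ in the total order ``$C$ precedes $C'$ when $C$ lies in a column strictly to the left of that of $C'$, or in the same column but a lower row'' --- precisely the order appearing in condition~(1) of both statistics. To prove it, split the pairs $i>k$ counted by $v(\varphi)$ according to whether the two cells lie in distinct blocks of $\bar\varphi$ or in the same block. In the distinct-block case, the cell of $i$ carries the strictly larger block value, condition~(1) is label-free and unchanged, and condition~(2) of $v$ --- that $i\le k_\mu(j)=\alpha_{b(j)}$ --- is equivalent to $b(i)\le b(j)$, which is exactly condition~(2) of $w$ for $\theta$; so these pairs are counted by $w(\theta)$. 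In the same-block case, condition~(2) of $v$ holds automatically: if the cell of $k$ has a right neighbour carrying $j$, then weak increase of that row of $\theta$ gives $b(j)\ge b$, whence $i\le\alpha_b\le\alpha_{b(j)}=k_\mu(j)$; and condition~(1) says precisely that the cell of $i$ precedes the cell of $k$ in the above order, so these pairs contribute $\sum_b\mathrm{inv}_b(\varphi)$.

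Finally, summing over a single fibre and using the classical identity $\sum_{\sigma\in\mathfrak{S}_m}t^{\mathrm{inv}(\sigma)}=[m]_t!$ independently across the blocks gives $\sum_{\varphi:\ \bar\varphi=\theta}t^{v(\varphi)}=t^{w(\theta)}\prod_b[\mu_b]_t!=[\mu]_t!\,t^{w(\theta)}$; summing over all tabloids $\theta$ of shape $\lambda$ and content $\mu$ and comparing with the displayed evaluation of the left-hand side yields $[\mu]_t!\,a_{\lambda\mu}(t)=[\mu]_t!\sum_\theta t^{w(\theta)}$, and cancelling $[\mu]_t!$ finishes the proof. I expect the main obstacle to be the bookkeeping in the identity $v(\varphi)=w(\theta)+\sum_b\mathrm{inv}_b(\varphi)$ --- in particular verifying that condition~(2) of $v$ genuinely degenerates to condition~(2) of $w$ across blocks and becomes vacuous within a block (this is where the tabloid weak-increase property enters), while condition~(1), being independent of the actual labels, transports verbatim under the block reduction.
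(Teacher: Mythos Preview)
Your proof is correct and follows essentially the same approach as the paper: specialize Theorem~\ref{thm:nilpotentstat} to $\mm=k_\mu$, split $v(\varphi)$ into same-block and different-block contributions (the paper's $v_s$ and $v_d$), identify the different-block part with $w(\bar\varphi)$, and use the inversion generating function $[\mu_b]_t!$ on each fibre. Your write-up is somewhat more explicit than the paper's about the reading order on cells and why condition~(2) of $v$ becomes vacuous within a block, but the argument is the same.
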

\begin{proof}
Let $\mu=(\mu_1,\ldots,\mu_\ell)$ and consider the Hessenberg function $\mm=k_\mu$. In this case, $X_{G(\mm)}(x;t)=( \prod_{i=1}^\ell[\mu_i]_t! ) e_\mu$. Since $\omega e_\mu=h_\mu$, it follows by Theorem~\ref{thm:nilpotentstat} that
\begin{equation}\label{eq:a=b}
\langle \tilde{H}_\lambda(x;t),h_\mu \rangle    \prod_{i=1}^\ell[\mu_i]_t!=\sum_{\varphi} t^{v(\varphi)}, 
\end{equation}
where the sum is over all $\mm$-compatible fillings of ${\rm dg}(\lambda)$. Consider the set partition $\mathcal{A}_\mu=\bigcup_{i=1}^\ell A_i$ of $[n]$ where the $i$th block $A_i$ contains all positive integers $x$ satisfying $\mu_1+\cdots+\mu_{i-1}< x\leq \mu_1+\cdots+\mu_i$; thus $A_i=\mm^{-1}(\{\mu_1+\cdots+\mu_i\})$. A filling is $\mm$-compatible precisely when, for each configuration \begin{tabular}{|c|c|} \hline $k$ & $j$ \\ \hline \end{tabular} of adjacent cells, the element $j$ lies in a block with index at least equal to the index of the block containing $k.$

Note that for any ${\mm}$-compatible filling, the filling obtained by arbitrarily permuting all entries in the same block of $\mathcal{A}_\mu$ is also $\mm$-compatible. For a filling $\varphi$, write $v(\varphi)=v_s(\varphi)+v_d(\varphi)$, where $v_s(\varphi)$ is the contribution to $v(\varphi)$ from elements $i>k$ both in the same block of $\mathcal{A}_\mu$ while $v_d(\varphi)$ is the contribution to $v(\varphi)$ from elements $i>k$ in different blocks of $\mathcal{A}_\mu$. If $\bar{\varphi}$ denotes the filling obtained from $\varphi$ by replacing all entries in the block $A_i$ by $i$, then $\bar{\varphi}$ has weakly increasing rows (thus it is a tabloid) and $v_d(\varphi)=w(\bar{\varphi})$. Given a tabloid $\theta$ of shape $\lambda$ and content $\mu$, it can be seen that the contribution to the sum $\sum_\varphi t^{v(\varphi)}$ from all fillings $\varphi$ for which $\bar{\varphi}=\theta$ is given by  $t^{w(\theta)}\prod_{i=1}^\ell[\mu_i]_t!$ (here we have used the fact that $[k]_t!$ is the generating polynomial for the inversion statistic on the set of all permutations of $k$ distinct letters). It follows from \eqref{eq:a=b} that
$$a_{\lambda \mu}(t)=\langle \tilde{H}_\lambda(x;t),h_\mu \rangle=\sum_{\theta} t^{w(\theta)},$$
where the sum is over all tabloids $\theta$ of shape $\lambda$ and content $\mu$.
\end{proof}

  \begin{figure}[htbp]
  \centering
  \begin{subfigure}[b]{0.48\textwidth}
    \centering
        \ytableausetup{centertableaux, boxsize=1em} 
    \begin{ytableau}
      1 &1 &1 &2 &2 &3\\
      1 &3 &3 &3\\
      2 &3 &3\\
      1 &2 &2\\
      1 &1 &1\\
      1 &2\\
    \end{ytableau}
    \caption{
        A tabloid of shape $(6,4,3,3,3,2)$ and content $(9,6,6)$. The entry 3 in the third row and  second column has value 3.
    }
    \label{fig:tabloid}
  \end{subfigure}
  \hfill 
  \begin{subfigure}[b]{0.48\textwidth} 
    \centering
  \includegraphics[scale=.89]{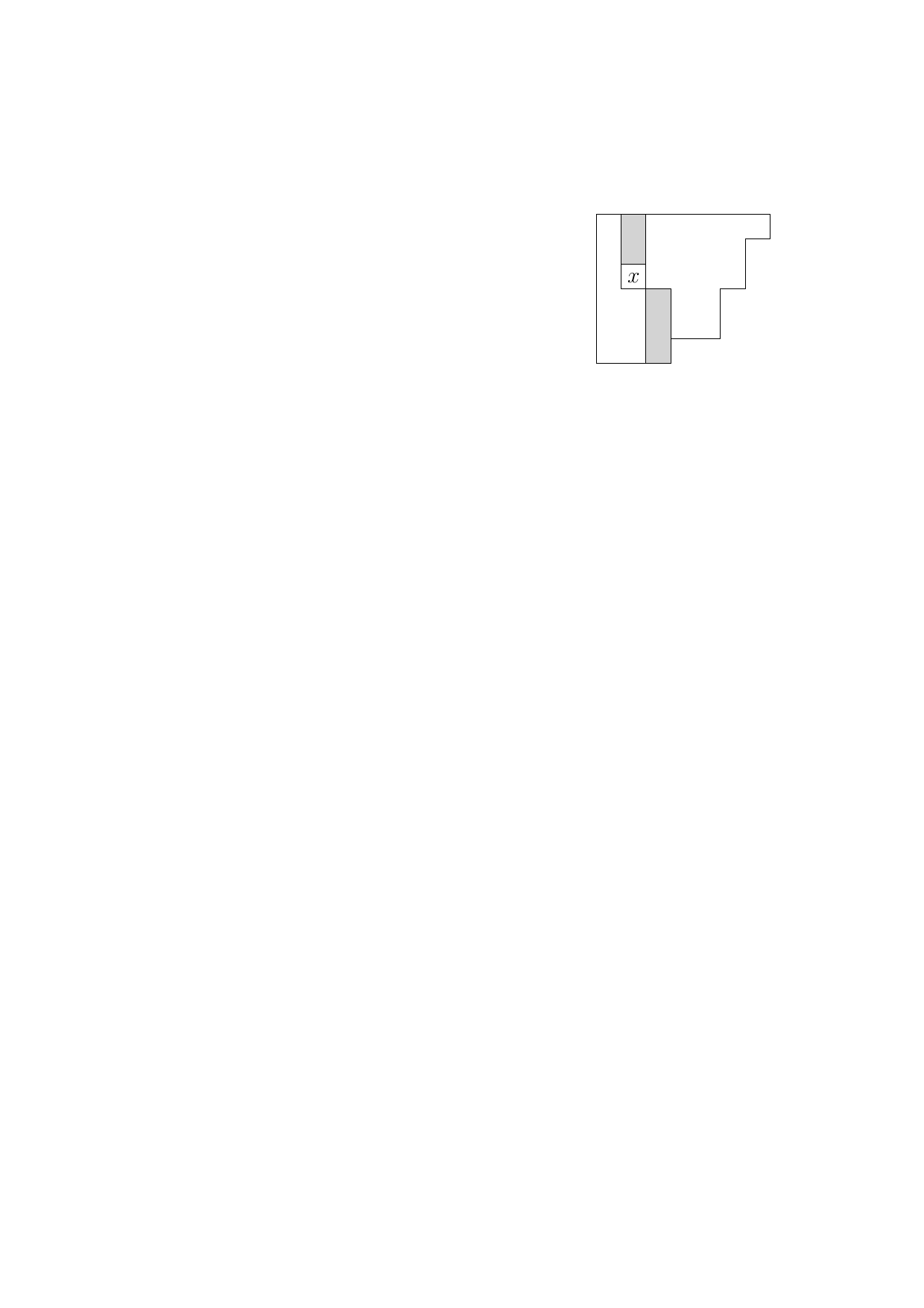}   
  \caption{Entries contributing to the value of  $x$ in a tabloid.}
  \label{fig:valuedef}
  \end{subfigure}
  \caption{Tabloid and value of an element. }
  \label{fig:combined1}
\end{figure} 
 
We now show that $w(\theta)$ is identical to the following statistic of Butler~\cite[Def.~1.3.1]{MR1223236}.
\begin{definition}\label{def:value} 
  If $\theta$ is a tabloid, then the value of an entry $x$ in $\theta$ is the number of smaller entries in the same column and above $x$ or in the next column to the right and below $x$ (see Figure \ref{fig:valuedef}). The value of $\theta$, denoted ${\rm val}(\theta)$, is the sum of the values of the entries in $\theta$.  
\end{definition}
  \begin{figure}[htbp]
  \centering
  \begin{subfigure}[b]{0.45\textwidth}
    \centering
    \includegraphics[scale=0.8]{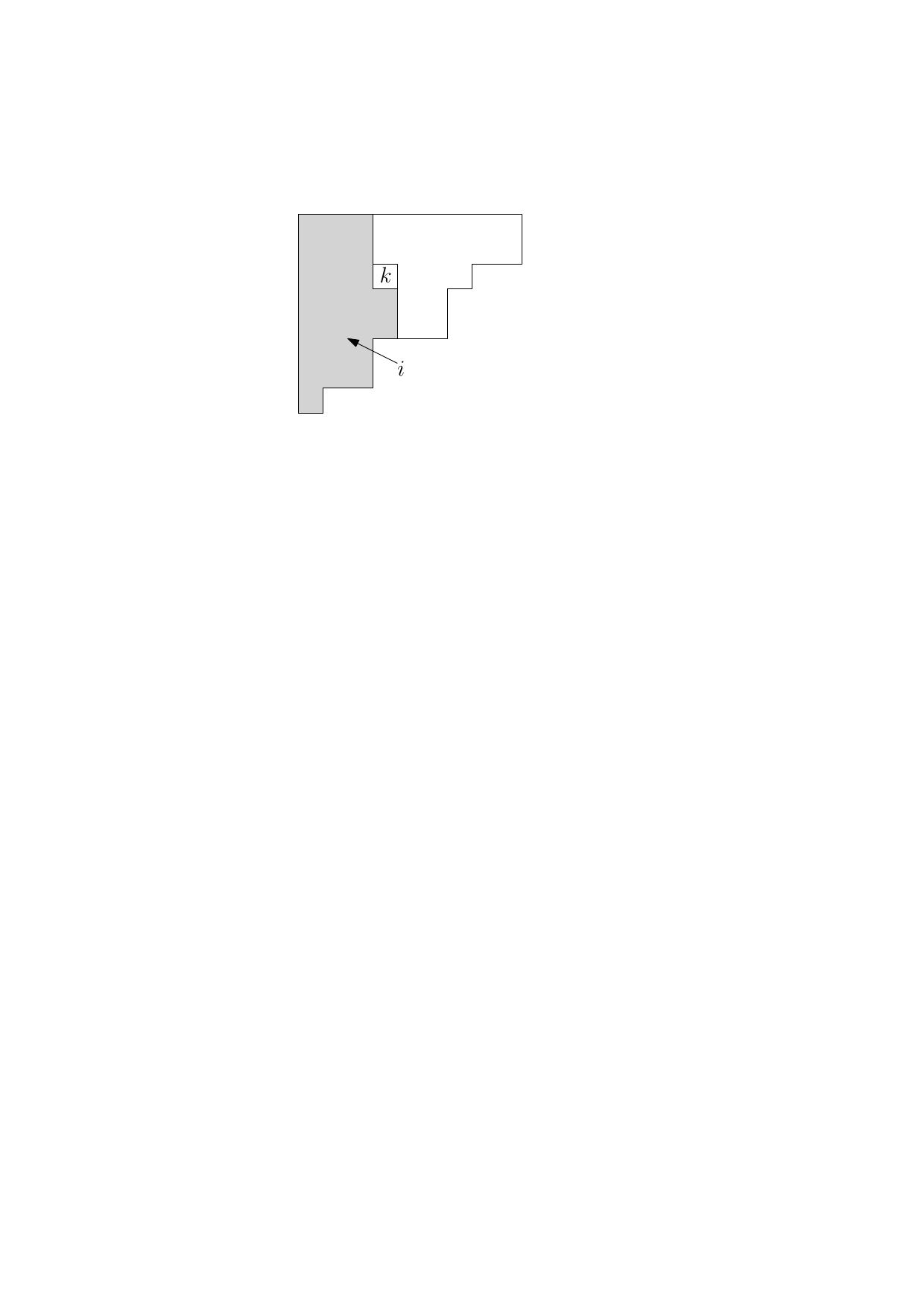}
    \caption{For an entry $k$, possible positions for $i$ which may contribute to $v(\varphi)$ for a filling $\varphi$. }
    \label{fig:ipos}
  \end{subfigure}
  \hfill
  \begin{subfigure}[b]{0.45\textwidth}
    \centering
    \includegraphics[scale=0.8]{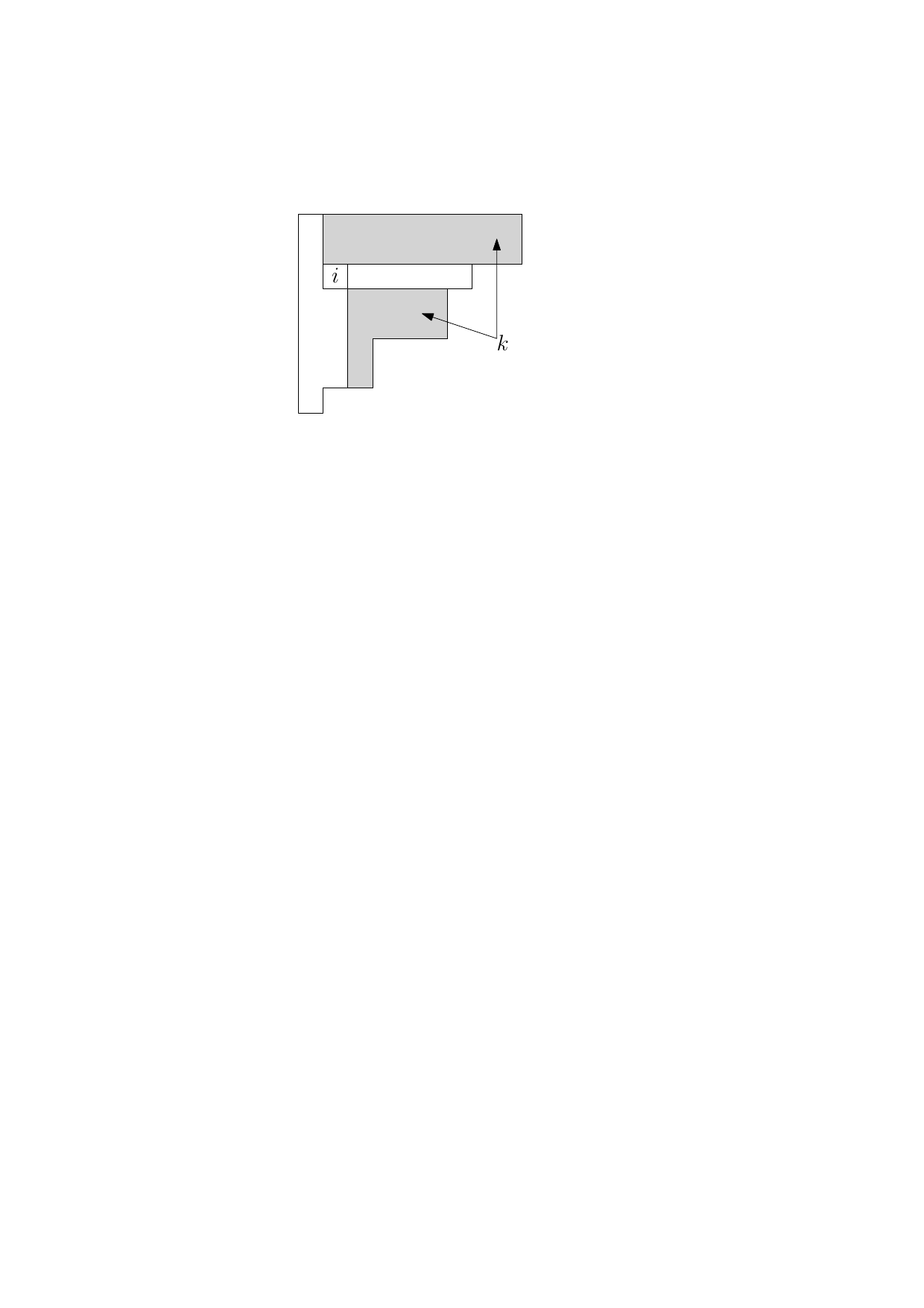}
    \caption{Given $i$, possible positions for $k$ which may contribute to $w(\theta)$ for a tabloid $\theta.$}
    \label{fig:value}
  \end{subfigure} 
  \caption{Overview of possible positions for $i$ and $k$.}      
  \label{fig:combined2}
\end{figure}
   
\begin{theorem}
 We have $w(\theta)={\rm val}(\theta)$ for each tabloid $\theta$.  
\end{theorem}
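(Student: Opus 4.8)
The plan is to show that the two statistics $w(\theta)$ and $\mathrm{val}(\theta)$ count the same set of pairs, by reversing the roles of the two entries in each pair. Recall that $w(\theta)$ is defined by summing over pairs $i>k$, fixing the larger entry $i$ and recording the positions of the smaller entry $k$ (namely $k$ lies above $i$ in the same column, or to the left of $i$, subject to the adjacency condition in (2)), whereas $\mathrm{val}(\theta)$ is defined by summing over each entry $x$ the number of strictly smaller entries either above $x$ in the same column or below $x$ in the next column to the right. So $\mathrm{val}(\theta)$ also counts pairs $i>k$, but now fixing the \emph{larger} entry in the role of $x$: for $\mathrm{val}$, given $x=i$, we count smaller $k$ that are above $i$ in the same column or below $i$ in the column immediately to the left of $i$. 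I would set up an explicit bijection between the pair-sets counted by the two statistics.

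First I would fix a tabloid $\theta$ and a pair of (positions of) entries with values $i>k$, and carefully enumerate the geometric configurations. For $\mathrm{val}$: either (a) $i$ is strictly above $k$ in the same column, or (b) $i$ is in the column immediately to the left of $k$, and $i$ is strictly below $k$ (i.e.\ the row of $i$ is below the row of $k$). For $w$: either (a$'$) $i$ is strictly below $k$ in the same column, or (b$'$) $i$ is in a column strictly to the left of $k$, with the side condition that if the cell immediately right of $k$ holds $j$ then $i\le j$ — but wait, in $\mathrm{val}$ there is no Hessenberg/adjacency condition since we are in the ``clique'' case $\mm=k_\mu$ and... actually the statistic $w$ on tabloids as defined has a genuine condition (2). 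The key observation to exploit is that for a tabloid (weakly increasing rows) this condition is automatic: I would prove that if $i$ appears to the left of $k$ and the box right of $k$ holds $j$, then since rows weakly increase $k\le j$; and if $i$ is below $k$ (lower row), the fact that rows are weakly increasing combined with $i>k$ forces $i\le j$ in all the relevant cases — I need to check this carefully, but the upshot should be that condition (2) never excludes a pair in a tabloid, so $w(\theta)$ simply counts pairs $i>k$ with $i$ below $k$ in the same column or in a strictly-left column.

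The main step is then the matching. Consider a pair of cells $A$ (entry $a$) and $B$ (entry $b$) with $a\ne b$; I claim it is counted exactly once by $w$ and exactly once by $\mathrm{val}$, \emph{provided} we account for ties correctly. If $A,B$ are in the same column, one is strictly above the other; $w$ counts this pair via the lower cell playing the role of $i$ needing it to be the larger entry, while $\mathrm{val}$ counts it via the upper cell if the upper cell is the larger entry. These are the \emph{same} pair only when the larger entry happens to be positioned as each statistic wants — so a naive cell-by-cell match fails, and instead I would argue by a global count: show that over all same-column pairs, $\#\{\text{counted by }w\}=\#\{\text{counted by }\mathrm{val}\}$, and likewise for pairs in different columns (where $w$ wants $i$ strictly left of $k$, $\mathrm{val}$ wants the smaller entry one column to the right of the larger and strictly below — i.e.\ $\mathrm{val}$'s ``$x$ above-left, smaller entry below-right'' reading). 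The cleanest route is: for each pair of entries $i>k$ occupying a fixed pair of cells, check that the $w$-condition holds iff the $\mathrm{val}$-condition holds after swapping which cell is ``distinguished,'' and since both statistics range over the same unordered set of cell-pairs this gives $w(\theta)=\mathrm{val}(\theta)$. I expect the main obstacle to be bookkeeping the boundary/tie cases precisely — in particular verifying that condition (2) in the definition of $w$ is vacuous on tabloids, and matching up the ``same column'' versus ``adjacent column'' dichotomy so that no pair is double-counted or dropped. Once the vacuity of (2) is established, the remaining identification is a direct geometric check comparing Figure~\ref{fig:value} with Definition~\ref{def:value}.
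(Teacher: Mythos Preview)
Your central claim---that condition (2) in the definition of $w$ is vacuous on tabloids---is false, and without it the whole matching collapses. Take the tabloid of shape $(2,1)$ with rows $1\,1$ and $2$. With $i=2$ (row 2, column 1), the entry $k=1$ in row 1, column 1 has $j=1$ immediately to its right, and $i\le j$ fails; so this pair is \emph{not} counted by $w$. Only the pair with $k$ in column $2$ survives, giving $w(\theta)=1=\mathrm{val}(\theta)$. If (2) were vacuous you would get $w(\theta)=2$. Your justification (``rows weakly increase, so $k\le j$'') only gives $k\le j$, not the required $i\le j$; since $k<i$, nothing follows.

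The paper's argument uses condition (2) in an essential way, and it is worth seeing how. Fix $i$. The pairs $(i,k)$ counted by $w$ have $k$ either above $i$ in the same column or in a column strictly to the right of $i$ (you also reversed left/right in a couple of places). Because rows are weakly increasing and because of condition (2), each row $\rho\neq$ row of $i$ contributes \emph{at most one} such $k$: it must be the rightmost entry in $\rho$ that is $<i$. Whether $\rho$ contributes is then detected by a single cell: if $\rho$ is above $i$, the cell in $\rho$ in the same column as $i$; if $\rho$ is below $i$, the cell in $\rho$ in the column immediately to the right of $i$. In each case $\rho$ contributes exactly when that cell holds an entry $<i$. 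Summing over rows, the count for fixed $i$ is precisely the value of $i$ in Definition~\ref{def:value}, and summing over $i$ gives $w(\theta)=\mathrm{val}(\theta)$. So condition (2) is not a nuisance to be argued away; it is exactly what reduces the many-column count in $w$ to the two-column count in $\mathrm{val}$.
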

\begin{proof}
We count pairs $i>k$ that contribute to $w(\theta)$ as follows. Fix an entry $i$ in $\theta$, and consider all possible entries $k<i$ in $\theta$ such that the pair $(i,k)$ is counted by $w(\theta)$. Such an entry $k$ appears either above $i$ and in the same column or in some column to the right of $i$ (see Figure \ref{fig:value}). In particular, $k$ necessarily lies in a row different from $i$. In each row of $\theta$, there can be at most one such element $k$ since, if $j$ appears in the box immediately to the right of $k$, we must have $k<i\leq j$. Moreover, since the rows are weakly increasing, one can determine if a given row $\rho$ contributes 1 to $w(\theta)$ as follows:
  \begin{enumerate}
  \item If $\rho$ lies above $i$, then $\rho$ contributes 1 to $w(\theta)$ precisely when the element in $\rho$ which lies in the same column as $i$ is less than $i$.
  \item If $\rho$ lies below $i$, then $\rho$ contributes 1 to $w(\theta)$ whenever the element of $\rho$ which lies in the column immediately to the right of $i$ is less than $i$.
  \end{enumerate}
Therefore, for a fixed $i$, the number of entries $k$ for which $(i,k)$ contributes to $w(\theta)$ equals the value of $i$. Summing over all entries $i$, we obtain a total contribution of ${\rm val}(\theta)$ and the theorem follows.
\end{proof}

\section{Acknowledgments} 
The first author gratefully acknowledges support from CNPQ, grant numbers 404747/2023-0 and 308637/2023-2. The second author is grateful for the support from  CNPQ, grant number 404747/2023-0. The third author acknowledges with appreciation support from Indo-Russian project DST/INT/RUS/RSF/P41/2021.
\printbibliography  
\end{document}